\documentclass[a4paper, 12pt]{article}
\pdfoutput=1

\usepackage[T1]{fontenc}
\usepackage[utf8]{inputenc}

\usepackage{amsfonts,amsmath,amssymb,mathtools,dsfont,microtype} 
\usepackage[dvipsnames]{xcolor} 
\usepackage[noBBpl]{mathpazo} 

\DeclareFontFamily{U} {cmr}{}

\DeclareFontShape{U}{cmr}{m}{n}{
	<-6> cmr5
	<6-7> cmr6
	<7-8> cmr7
	<8-9> cmr8
	<9-10> cmr9
	<10-12> cmr10
	<12-> cmr12}{}

\DeclareSymbolFont{Xcmr} {U} {cmr}{m}{n}
\DeclareMathSymbol{\Delta}{\mathord}{Xcmr}{'001}
\DeclareMathSymbol{\Upsilon}{\mathord}{Xcmr}{'007}
\DeclareMathSymbol{\Omega}{\mathord}{Xcmr}{'012}

\usepackage[labelsep = period, labelfont = bf, justification = centering]{caption}
\usepackage{float,graphicx,subcaption}
\DeclareCaptionSubType*[roman]{figure}

\usepackage{booktabs,makecell}
\usepackage{enumitem,mdwlist}
\setlist[itemize]{topsep=0ex,itemsep=0ex,parsep=0.4ex}
\setlist[enumerate]{topsep=0ex,itemsep=0ex,parsep=0.4ex}

\usepackage{pgf,tikz,ifthen,calc}
\usepackage{float, graphicx}
\usepackage{tkz-euclide}
\tkzSetUpPoint[fill=black, size = 3pt]
\tkzSetUpLine[color=black, line width=0.6pt]
\tkzSetUpCircle[color=black, line width=0.6pt]

\usepackage[left = 2.5cm, right = 2.5cm, top = 2.5cm, bottom = 2.5cm, headsep = 12pt, headheight = 15pt]{geometry}
\usepackage{parskip}

\usepackage[hyphens]{url} 
\usepackage[linktoc = all, hidelinks, colorlinks, unicode=true, pagebackref=true]{hyperref} 
\usepackage[capitalise, compress, nameinlink, noabbrev]{cleveref} 

\hypersetup{linkcolor={blue!70!black}, citecolor={green!70!black}, urlcolor={blue!70!black}}

\usepackage{amsthm,thmtools,thm-restate}
\declaretheorem[name = Theorem, numberwithin = section, style = plain]{thm}

\declaretheorem[name = Corollary, numberlike = thm, style = plain]{cor}
\declaretheorem[name = Conjecture, numberlike = thm, style = plain]{conj}
\declaretheorem[name = Definition, numberlike = thm, style = definition]{defi}
\declaretheorem[name = Lemma, numberlike = thm, style = plain]{lem}

\crefname{defi}{Definition}{Definitions}
\crefname{thm}{Theorem}{Theorems}
\crefname{lem}{Lemma}{Lemmas}
\crefname{conj}{Conjecture}{Conjectures}
\crefname{claim}{Claim}{Claims}
\crefname{cor}{Corollary}{Corollaries}
\crefname{obs}{Observation}{Observations}
\crefname{prop}{Proposition}{Propositions}
\crefname{que}{Question}{Questions}
\crefname{rem}{Remark}{Remarks}
\crefname{subsection}{\S}{\S\S}

\DeclareFontFamily{U}{matha}{\hyphenchar\font45}
\DeclareFontShape{U}{matha}{m}{n}{
	<5> <6> <7> <8> <9> <10> gen * matha
	<10.95> matha10 <12> <14.4> <17.28> <20.74> <24.88> matha12
}{}
\DeclareSymbolFont{matha}{U}{matha}{m}{n}

\DeclareMathSymbol{\specialuparrow}{\mathrel}{matha}{"D2}
\DeclareMathSymbol{\specialrightarrow}{\mathrel}{matha}{"D1}

\renewcommand*{\backref}[1]{}
\renewcommand*{\backrefalt}[4]{
	\ifcase #1 Not cited.%
	\or $\specialuparrow$#2%
	\else $\specialuparrow$#2%
	\fi%
}

\renewcommand{\epsilon}{\varepsilon}
\renewcommand{\ge}{\geqslant}
\renewcommand{\le}{\leqslant}
\renewcommand{\geq}{\geqslant}
\renewcommand{\leq}{\leqslant}

\renewcommand{\subset}{\subseteq}
\renewcommand{\supset}{\supseteq}

\DeclarePairedDelimiter{\abs}{\lvert}{\rvert}
\DeclarePairedDelimiter{\ceil}{\lceil}{\rceil}
\DeclarePairedDelimiter{\floor}{\lfloor}{\rfloor}
\DeclarePairedDelimiter{\set}{\lbrace}{\rbrace}
\DeclarePairedDelimiter{\norm}{\lVert}{\rVert}
\DeclarePairedDelimiter{\rou}{\lfloor}{\rceil}

\newcommand*{\bH}{\mathbb{H}}
\newcommand*{\bN}{\mathbb{N}}

\newcommand*{\bR}{\mathbb{R}}
\newcommand*{\bZ}{\mathbb{Z}}

\newcommand*{\cG}{\mathcal{G}}

\newcommand*{\cL}{\mathcal{L}}
\newcommand*{\cO}{\mathcal{O}}
\newcommand*{\cP}{\mathcal{P}}

\newcommand*{\cY}{\mathcal{Y}}

\DeclareMathOperator{\Cay}{Cay}
\DeclareMathOperator{\dist}{dist}
\DeclareMathOperator{\round}{round}

\newcommand{\dH}{d^{\bH}}
\newcommand{\defn}[1]{\textcolor{Maroon}{\emph{#1}}}
\newcommand*{\eps}{\varepsilon}

\title{Disproof of the tree product conjecture\\ via the Heisenberg group}
\author{Freddie Illingworth\footnotemark[2] \qquad Sergey Norin\footnotemark[3] \\
Raphael Steiner\footnotemark[4]}
\date{\today}

\begin{document}

\maketitle

\renewcommand{\thefootnote}{\fnsymbol{footnote}} 

\footnotetext[2]{Department of Mathematics, University College London, UK (\textsf{\href{mailto:f.illingworth@ucl.ac.uk}{f.illingworth@ucl.ac.uk}}).}

\footnotetext[3]{Department of Mathematics and Statistics, McGill University, Montr\'{e}al, Canada (\textsf{\href{mailto:sergey.norin@mcgill.ca}{sergey.norin\allowbreak@mcgill.ca}}). Research supported by NSERC.}

\footnotetext[4]{Department of Mathematics, ETH Z\"{u}rich, Switzerland (\textsf{\href{mailto:raphaelmario.steiner@math.ethz.ch}{raphaelmario.steiner@math.ethz.ch}}). Research supported by the SNSF Ambizione Grant No. 216071.}

\renewcommand{\thefootnote}{\arabic{footnote}} 

\begin{abstract}
    \noindent Product structure theory aims to understand complex graphs by embedding them into products of simpler graphs. In this direction, Campbell, Distel, Gollin, Harvey, Hendrey, Hickingbotham, Mohar and Wood (2022) put forth the conjecture that all graphs of degree-$d$ polynomial growth (i.e., where balls of radius $r$ have $\cO(r^d)$ vertices) can be embedded into the strong product of $d$ trees, each with linear growth, and a constant-size clique. In this paper, we disprove this conjecture for $d = 4$. The counterexamples are finite subgraphs of a Cayley graph of the discrete $3$-dimensional Heisenberg group $\bH(\bZ)$. These graphs were first proposed by Huang and McCarty as potential counterexamples to the conjecture. A key technical tool of our proof is the ``quantitative central collapse'' theorem due to Cheeger, Kleiner and Naor (2011), guaranteeing that every Lipschitz map from the continuous Heisenberg group $\bH$ to the function space $L_1$ collapses along a central line.
\end{abstract}

\section{Introduction}

Product structure theory aims to understand the structure of graphs in complicated classes better by representing them as subgraphs of products of graphs in simpler classes, typically with bounded tree-width. The area emerged in 2019, sparked in particular by the work of Dujmovi\'{c}, Joret, Micek, Morin, Ueckerdt and Wood~\cite{DujmovicJMMUW2020QueueNumber}, who proved their now famous \emph{Planar Graph Product Structure Theorem}, stating that every planar graph is isomorphic to a subgraph of the \emph{strong product}\footnote{Recall that given two graphs $H_1, H_2$, the \defn{strong product} \defn{$H_1\boxtimes H_2$} of $H_1, H_2$ is defined as the graph with vertex set $V(H_1)\times V(H_2)$, in which two distinct vertices $(u_1,u_2), (v_1,v_2)$ are adjacent if and only if for both $i\in \{1,2\}$ we have that $u_i=v_i$ or $u_iv_i\in E(H_i)$.} of a path and a graph with bounded tree-width. A consequence of this result solved a longstanding open problem about the queue-number of planar graphs, and many more applications followed soon after. Since this seminal result, the area has flourished into a highly active and important research branch of structural graph theory and has also found applications to other areas of combinatorics. See~\cite{A,B,C,D,E,F,G,H,I} for only a small selection of important recent results from graph product structure theory, and~\cite{J} for an example of an application to Ramsey theory.

In this work, we shall be concerned with product structure theory for classes of graphs with \defn{polynomial growth}. To make this precise, let us define, given a (locally finite, possibly infinite) graph $G$, its \defn{growth function} $f_G \colon \bN \to \bN\cup\{\infty\}$ as the function that assigns to each $r\in \bN$ the supremum of the sizes of balls of radius $r$ in $G$. We then say that a graph class $\cG$ has \defn{degree-$d$ polynomial growth} if there exists an absolute constant $c>0$ such that $f_G(r) \le cr^d$ for every $G \in \cG$ and $r\in \bN$. 

Graph classes with polynomial growth form a fundamental topic of research in group theory, see e.g.~\cite{G1,G2,G3,G4,G5,G6,G7} (here, the focus is on Cayley graphs, in particular). However, they also play an important role in a variety of further areas such as in metric geometry~\cite{M1}, in algebraic graph theory~\cite{A1,A2,A3,A4,A5,A6} and in the study of random infinite planar graphs~\cite{R1,R2}.

Graph classes with \defn{linear growth} (i.e., $d = 1$) were studied by Campbell, Distel, Gollin, Harvey, Hendrey, Hickingbotham, Mohar, and Wood~\cite{MR4614535} who proved that they must have bounded tree-width. More strongly, they showed that graphs in classes of linear growth are isomorphic to subgraphs of strong products $T\boxtimes K_C$, where $T$ is a bounded-degree tree and $C$ is a constant depending only on the graph class. Another relevant product structure result comes from the seminal work of Krauthgamer and Lee~\cite{M1}, who showed that graphs in classes with degree-$d$ polynomial growth are subgraphs of the strong product of $\cO(d\log d)$ paths, and that the $\cO(d\log d)$ bound in this result is best-possible.

Pause to note that if graph classes $\cG_1,\dots,\cG_k$ are such that $\cG_i$ has degree-$d_i$ polynomial growth, then the class of graphs of the form $G_1 \boxtimes \dots \boxtimes G_k$ with $G_i\in \cG_i$ has degree-$(d_1+\dots+d_k)$ polynomial growth. 

With this in mind, it is natural to ask whether the number of factors $\cO(d\log d)$ in Krauthgamer and Lee's result can be improved to the tight number $d$ when allowing the factors of the product to be linear-growth graphs of slightly more general shape than just paths (which would then recover precisely the same degree of polynomial growth in the products as for the original graph class). In this direction, Campbell et al.~put forth the following conjecture, which would not only give a very satisfactory affirmative answer to this question but also strengthen their aforementioned result on graph classes of linear growth.

\begin{conj}[Campbell, Distel, Gollin, Harvey, Hendrey, Hickingbotham, Mohar, Wood~{\cite[Conj.~15]{MR4614535}}]\label{con:wood}
    There exist functions $g \colon \bR \times \bN \to \bN$ and $h \colon \bR \times \bN \to \bN$ such that for every $c\in \bR_{\ge 1}$ and $d \in \bN$, every finite graph $H$ with growth $f_H(r)\le cr^d$ is isomorphic to a subgraph of $T_1 \boxtimes \dots \boxtimes T_d \boxtimes K_{g(c,d)}$, where each $T_i$ is a tree with growth $f_{T_i}(r)\le h(c,d)r$. 
\end{conj}

As the main result of this paper, we show that this beautiful conjecture, unfortunately, is false, already when $d=4$.

Concretely, we prove the following result about a Cayley graph of the discrete Heisenberg group (see \cref{sec:Heisenberg} for definitions). While this is an infinite graph, a relatively standard compactness argument (given in \cref{sec:compactness}) then shows that \cref{con:wood} is false also for finite graphs.

\begin{thm}\label{thm:main}
    There is a Cayley graph of the discrete Heisenberg group, $\Cay(\bH(\bZ), T)$, whose growth function is at most $27\cdot r^4$ and such that, for every $C \in \bN$ and every quadruple of finite or countably infinite trees $F_1, F_2, F_3, F_4$ satisfying $f_{F_i}(r) \leq Cr$ \textup{(}$i = 1, 2, 3, 4$\textup{)}, no subgraph of $F_1\boxtimes F_2\boxtimes F_3 \boxtimes F_4 \boxtimes K_C$ is isomorphic to $\Cay(\bH(\bZ), T)$.
\end{thm}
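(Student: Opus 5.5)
Fix the generating set $T=\{x^{\pm1},y^{\pm1},z^{\pm1}\}$, with the standard generators $x,y$ and the central element $z=[x,y]$; more generators could be added if the growth constant requires it. With $\dH$ the resulting word metric, the classical ball–box estimate says that the ball of radius $r$ is contained in the box $\{(a,b,c): |a|,|b|\le r,\ |c|\le r^2\}$. Counting lattice points gives $(2r+1)^2(2r^2+1)\le 27r^4$ for $r\ge 1$, which settles the growth bound. For the main claim, suppose for contradiction that $\phi\colon\Cay(\bH(\bZ),T)\to F_1\boxtimes\dots\boxtimes F_4\boxtimes K_C$ is an injective homomorphism, where each tree $F_i$ has linear growth.

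\textbf{Step 1: turn trees into $L_1$.} A tree metric embeds isometrically into $L_1$, since $d_F(u,v)$ counts the edges on the $u$--$v$ path, which are exactly the cuts separating $u$ from $v$. Let $\pi_i$ denote the coordinate projection to $F_i$. Then $\psi=(\pi_1\circ\phi,\dots,\pi_4\circ\phi)$ is a $1$-Lipschitz map into $\ell_1^4(L_1)\cong L_1$, with the $\ell_1$-sum norm. Using the scaling automorphisms $\delta_\lambda$ of $\bH$, I will rescale. Restrict $\psi$ to the lattice points of a ball of radius $R$. Compose with $\delta_R$ to get a map on a net in the unit ball of $\bH$, divide distances by $R$, and extend to a Lipschitz map on the unit ball. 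The extension can be done by a partition of unity, since $L_1$ is a linear target and the net has bounded geometry; the extension error is $O(1/R)$. Now apply the Cheeger–Kleiner–Naor quantitative central collapse theorem. It gives a ball of radius $\rho\ge (\log R)^{-O(1)}$ inside the unit ball such that the map collapses along the centre. Concretely, points $p$ and $pz^{t}$ with $\dH(p,pz^t)\approx \eps\rho$ are sent to points at distance at most $\eps^{2}\rho$, or at least at distance $o(\eps\rho)$. Unscaling, this yields a region of radius $r=\rho R$ in the discrete group. In that region, pairs $g, gz^{s}$ with $s\asymp (\eps r)^2$ have $\dH$-distance about $\eps r$, but their images are within $\eta\,\eps r$ in every tree coordinate, where $\eta\to 0$ as $R\to\infty$.

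\textbf{Step 2: a counting contradiction from linear growth.} This step is the main obstacle. Take a box of roughly $r^4$ vertices inside the good ball. Partition it into central fibres, i.e.\ cosets of $\langle z\rangle$ intersected with the box, and subdivide each fibre into segments of length about $(\eps r)^2$. By collapse, most segments have image of diameter at most $\eta\eps r$ in each tree, up to an averaged exceptional set. The theorem presumably gives an $L_1$-average statement, so I will use Markov's inequality to control the exceptions. Each such segment has $(\eps r)^2$ vertices, and $\phi$ is injective while the $K_C$ factor has only $C$ values. Hence the image of a segment lies in a product of four tree-balls of radius $\eta\eps r$, and by linear growth that product has at most $C(C\eta\eps r)^4$ points. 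This alone is not a contradiction. So I will combine it with the horizontal directions, via a packing argument: the whole box of $\asymp r^4$ vertices must map injectively into $\prod_i B_{F_i}(r)\times K_C$, which has size $O(C^5r^4)$. That is consistent, so pure counting fails.

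The real contradiction should instead come from volume versus diameter. Consider the $\sim \eps^{-2}$ segments of one fibre of length $r^2$, mapped near each other. Collapse at all scales $\eps$ along a fibre implies that the whole fibre (of $\dH$-diameter $r$) maps to diameter $o(r)$. Ranging over the $r^2$ horizontal positions, the box then maps into a set whose tree-coordinates have effective volume $O(r^2)\cdot o(r)^2 \cdot C$. Here the first two trees are spent spreading the horizontal plane and the other two absorb collapsed fibres. Making this precise, presumably by summing the collapse over disjoint good balls, gives a total image size of $o(r^4)$ compared with $\Omega(r^4)$ vertices, contradicting injectivity. I expect the delicate part to be exactly this volume accounting, which uses linear growth of the trees, together with passing the $L_1$-averaged collapse statement to honest small images of most fibres.
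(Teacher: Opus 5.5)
\textbf{There is a genuine gap: Step 2 never derives a contradiction.} The growth bound and Step 1 essentially follow the paper's route: an isometric $L_1$-embedding of each tree, an $\ell_1$-sum giving a Lipschitz map into $L_1$, rescaling by the dilations, Lipschitz extension, and then Cheeger--Kleiner--Naor. The problem is what comes after. The claim that ``collapse at all scales along a fibre'' makes a whole fibre of $\dH$-diameter $r$ map to diameter $o(r)$ is not available. CKN gives collapse at a single scale, for a positive proportion of central pairs, with factor only $\eta=(\log(1/\eps))^{-\delta}$. Chaining the $\eps^{-2}$ segments of a fibre therefore bounds the image diameter only by $\eps^{-2}\cdot\eta\eps r=\eta r/\eps\gg r$. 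Likewise, nothing forces two trees to ``spread the horizontal plane'' while the other two ``absorb fibres''; $\phi$ is arbitrary. So your final $o(r^4)$ versus $\Omega(r^4)$ count is a heuristic, not an argument. Separately, it is $\eps$, not $R$, that drives $\eta\to 0$. Letting $R\to\infty$ only makes the discretisation error negligible, so $\eps$ must be fixed small in terms of $C$ first.

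\textbf{The missing idea: thicken a collapsed central set horizontally by its image diameter.} You were one step from this when you wrote ``this alone is not a contradiction''. As in the paper, first use averaging to pick a single base point whose central fibre contains a set $K$ of $\gtrsim(\eps r)^2$ points, all within $\eta\eps r$ of the base point's image. The triangle inequality through the base point then handles the exceptions; you need neither whole segments nor Markov. Rounding to the lattice should keep $K$ inside a single coset of $Z(\bH(\bZ))$. Now let $K'=K\cdot R$, where $R$ is the set of positive words in $(1,0,0),(0,1,0)$ of length at most $d\asymp\eta\eps r$.
\begin{itemize}
\item \emph{Lower bound.} Right-multiplying a subset of a central coset by elements with distinct projections to the first two coordinates gives pairwise disjoint sets. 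Hence $\abs{K'}\gtrsim(\eps r)^2\cdot(\eta\eps r)^2$.
\item \emph{Upper bound.} Each $\pi_i\circ\phi$ is $1$-Lipschitz, so $\pi_i\phi(K')$ lies in a ball of radius $O(\eta\eps r)$ in $F_i$. Injectivity and linear growth then give $\abs{K'}\le C\cdot\bigl(O(C\eta\eps r)\bigr)^4$.
\end{itemize}
Together these give $\eta^2\lesssim C^5\eta^4$, which is impossible once $\eps$, and hence $\eta$, is small in terms of $C$. This is exactly the paper's inequality $\eps^2M^4\le 7^4C^5\eps^4M^4$. The gain of $\eta^2$ comes from thickening horizontally only as far as the collapsed image diameter, rather than counting the whole box.
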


\paragraph*{Overview and organisation.} In the remainder of the paper, we present our proof of \cref{thm:main} as well as a finite version (\cref{thm:main-finite}), the latter then disproves \cref{con:wood}. Our proof is based on a deep result of Cheeger, Kleiner and Naor~\cite{CKN11}, which guarantees that every Lipschitz map from the continuous $3$-dimensional Heisenberg group to the function space $L_1$ collapses along a so-called central line. Several technical hurdles need to be overcome to use this result for the purpose of proving our graph-theoretical result (\cref{thm:main}), and we address these in different sections.

We start in \cref{sec:Heisenberg} by formally introducing the continuous and discrete Heisenberg groups, record some relevant metric and measure-theoretic properties of these groups, and explicitly describe the Cayley graph used in the proof of \cref{thm:main}. In \cref{sec:collapse} we formally state the key ingredient from the work of Cheeger, Kleiner and Naor~\cite{CKN11}, called the ``quantitative central collapse theorem''. In \cref{sec:discrete} we then convert this result, which in its original form speaks about properties of $L_1$-embeddings of the \emph{continuous} Heisenberg group, into a ready-to-use consequence for the \emph{discrete} Heisenberg group. Finally, in \cref{sec:isometry} we record a simple additional ingredient about isometric $L_1$-embeddings of infinite trees and then go on in \cref{sec:proof} to use this fact and the discrete version of Cheeger, Kleiner and Naor's result to prove our main result, \cref{thm:main}. Finally, in \cref{sec:compactness} we supply the aforementioned compactness argument, which allows us to convert \cref{thm:main} to a finite version, \cref{thm:main-finite}, which then disproves \cref{con:wood}.

\paragraph*{Notation and Terminology.} In this paper, some considered graphs will be finite while some will be infinite. When referring to ``graph'' without any additional specification, we will assume a finite graph by default. Given a (possibly infinite) graph $G$, we denote by $V(G)$ its vertex-set and by $E(G)$ its edge-set. Throughout, $\mathbb{N}$ denotes the set of natural numbers excluding $0$. Given real positive expressions $a,b$, we use the notation \defn{$a \gtrsim b$} to express that $\frac{a}{b}$ is bounded from below by an implicit, absolute, positive constant. We also use the notation \defn{$a\asymp b$} to express that both $a\gtrsim b$ and $b\gtrsim a$.
For a real number $a$, we denote by \defn{$\rou{a}$} the closest integer to $a$ (defined as $a-1/2$ if $a$ is half-integral).

\paragraph*{Acknowledgements.} This research was carried out at the workshop ``Global Structure and Geometry of Graphs'', at the MATRIX Research Institute in Australia. We thank the Institute for its hospitality and the workshop organisers and participants for creating a stimulating working environment. We would like to thank Chun-Hung Liu, David Wood, and Jung-Hon Yip for fruitful discussions on the topic of this paper during the workshop. Finally, it is important to note that the idea of investigating \cref{con:wood} for the example of Cayley graphs of the Heisenberg group is not ours, and was previously proposed by Owen Huang and Rose McCarty (personal communication) and came about when Huang was studying geometric group theory. The examples were also explicitly proposed for investigation by McCarty at the \href{https://web.math.princeton.edu/~tunghn/2024bbd.pdf}{2024 Barbados Graph Theory workshop}. Our main contribution here is to settle this problem.

\paragraph*{AI Disclosure.} We used ChatGPT 5.5 Pro to assist us with working out the details of the compactness argument presented in \cref{sec:compactness}. Its output was carefully checked and rewritten by the authors. The content of all other sections is due entirely to the authors.

\section{The Heisenberg group}\label{sec:Heisenberg}

We give a brief overview of the Heisenberg group and its properties that we require. See, for example, \cite[\S\S1.1, 2.3, 2.4]{CKN11} for further details.

The \defn{continuous Heisenberg group}, \defn{$\bH$}, consists of $3 \times 3$ upper triangular real matrices with ones on the diagonal, under the operation of matrix multiplication. Identifying the triple $(a, b, c) \in \bR^3$ with the matrix
\begin{equation*}
    \begin{pmatrix}
        1 & a & \tfrac{c + ab}{2} \\
        0 & 1 & b \\
        0 & 0 & 1
    \end{pmatrix}
\end{equation*}
gives the view of $\bH$ as $\bR^3$ equipped with the non-commutative group operation
\begin{equation}\label{eq:product}
    (a, b, c) \cdot (a', b', c') = (a + a', b + b', c + c' + ab' - ba')
\end{equation}
and identity element $(0, 0, 0)$.
The group has an associated \defn{Carnot-Carath\'{e}odory metric}, \defn{$\dH$}. We will not need to define this metric here but will need the following two equations (cf.~\cite[(2.16)]{CKN11} and the sentence directly after that equation):
\begin{align}
    \dH((a, b, c), (a', b', c')) & \asymp \sqrt{(a - a')^2 + (b - b')^2} + \abs{c - c' + ab' - ba'}^{1/2}, \label{eq:dH} \\
    \dH((a, b, c), (a, b, c')) & = \abs{c - c'}^{1/2}. \label{eq:dHv}
\end{align}
$\bH$ has an important automorphism, \defn{$A_R$}$\colon \bR^3 \to \bR^3, (a, b, c) \mapsto (Ra, Rb, R^2 c)$, for $R > 0$, which is an $R$-homothety of $(\bH, \dH)$: $\dH(A_Rx, A_Ry) = R \cdot \dH(x, y)$ (cf.~\cite[(1.2)]{CKN11}).

The \defn{centre} of $\bH$, \defn{$Z(\bH)$}, is the normal subgroup of $\bH$ consisting of all elements that commute with the entire group. This is $\set{0} \times \set{0} \times \bR$.
Note that cosets of $Z(\bH)$ are parametrised by $\bR^2$: $(a, b, c) \cdot Z(\bH) = \set{a} \times \set{b} \times \bR$. 
It follows that $A_R(x \cdot Z(\bH)) = A_R x \cdot Z(\bH)$ for every $x \in \bH$ and $R>0$.

The \defn{discrete Heisenberg group}, \defn{$\bH(\bZ)$}, is the integer lattice $\bZ^3$ equipped with the product~\eqref{eq:product} (one can easily check that indeed, inverses of integer points of $\mathbb{H}$ again have integer coordinates, so this indeed forms a group). It is furthermore easy to check that $\mathbb{H}(\mathbb{Z})$ is generated by $T \coloneqq \set{(\pm 1, 0, 0), (0, \pm 1, 0), \allowbreak (0, 0, \pm 1)}$. This generating set yields the Cayley graph which we use in \cref{thm:main}.

\begin{defi}[Cayley graph of discrete Heisenberg group]
    The Cayley graph of $\bH(\bZ)$ with respect to generating set $T$, denoted \defn{$\Cay(\bH(\bZ), T)$}, has vertex-set $\bZ^3$ and the six neighbours of each $(a, b, c) \in \bZ^3$ are given by
    \begin{align*}
        (a, b, c) \cdot (\pm 1, 0, 0) & = (a \pm 1, b, c \mp b),\\
        (a, b, c) \cdot (0, \pm 1, 0) & = (a, b \pm 1, c \pm a),\\
        (a, b, c) \cdot (0, 0, \pm 1) & = (a, b, c \pm 1).
    \end{align*}
\end{defi}

We will need the following simple upper bound on the growth of balls in $\Cay(\bH(\bZ), T)$.

\begin{lem}\label{lem:growth}
    For every $r\in \bN$, the growth function satisfies $f_{\Cay(\bH(\bZ), T)}(r) \leq 27\cdot r^4$.
\end{lem}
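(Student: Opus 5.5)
Since $\Cay(\bH(\bZ), T)$ is vertex-transitive, it suffices to bound the ball of radius $r$ around the identity $(0,0,0)$. The plan is to find an explicit box that contains every group element of word length at most $r$, and then count the integer points in that box.

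Let $x = t_1 t_2 \cdots t_k$ with $k \le r$ and each $t_j \in T$. Split the generators into $p$ horizontal ones $(\pm1,0,0)$, $q$ vertical ones $(0,\pm1,0)$ and $s$ central ones $(0,0,\pm1)$, so that $p + q + s = k \le r$. Write $x = (a,b,c)$. The first two coordinates simply add, so $\abs{a} \le p$ and $\abs{b} \le q$. For the third coordinate, I track how $c$ changes along the walk, using the neighbour formulas in the definition of the Cayley graph:
\begin{itemize}
    \item a step $(\pm1,0,0)$ changes $c$ by $\mp b_{\text{current}}$, and $\abs{b_{\text{current}}} \le q$ at all times;
    \item a step $(0,\pm1,0)$ changes $c$ by $\pm a_{\text{current}}$, and $\abs{a_{\text{current}}} \le p$ at all times;
    \item a central step changes $c$ by $\pm 1$.
\end{itemize}
Summing these contributions gives
\[
    \abs{c} \le pq + qp + s = 2pq + s \le 2\Big(\frac{p+q}{2}\Big)^2 + s \le \frac{r^2}{2} + r,
\]
where the middle step is AM--GM together with $p + q \le r$.

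Hence the ball of radius $r$ lies inside the box $[-r,r]^2 \times [-r^2/2 - r,\, r^2/2 + r]$. This box contains at most
\[
    (2r+1)^2\,(r^2 + 2r + 1) = (2r+1)^2 (r+1)^2
\]
integer points. For $r \ge 1$ we have $2r+1 \le 3r$ and $r+1 \le 2r$, so the count is at most $9r^2 \cdot 4r^2 = 36r^4$. This is too weak, since the target constant is $27$.

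To sharpen the bound, I use that the first two coordinates are constrained jointly, not separately. From $\abs{a} + \abs{b} \le p + q \le r$, the pair $(a,b)$ lies in an $\ell_1$-ball of radius $r$, which contains $2r^2 + 2r + 1 \le 5r^2$ integer points. The third coordinate satisfies $\abs{c} \le r^2/2 + r$, so it takes at most $r^2 + 2r + 1 \le 4r^2$ values. This gives a total of at most $20r^4 \le 27r^4$.

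The main step requiring care is the bound on $c$: one has to check that the increments really are bounded by the current values of $a$ and $b$ (which are at most $p$ and $q$), rather than by some cruder quantity. The remaining work is the routine counting of lattice points above.
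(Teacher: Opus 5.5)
Your proof is correct and follows the same strategy as the paper: you reduce to the identity by vertex-transitivity, confine the radius-$r$ ball to an explicit coordinate region by tracking how each generator changes $(a,b,c)$, and count lattice points; your finer bookkeeping (splitting steps by type, and using $\abs{a}+\abs{b}\le r$) even gives $20r^4$. That refinement is not needed, though: every vertex in the ball has $\abs{a},\abs{b}\le r$, so each step changes $c$ by at most $r$, giving $\abs{c}\le r^2$ and hence $(2r+1)^2(2r^2+1)\le (3r)^2\cdot 3r^2=27r^4$, which is exactly the paper's argument.
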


\begin{proof}
    By definition of the growth function, it suffices to show that every ball of radius $r$ around any vertex of $\Cay(\bH(\bZ), T)$ contains at most $27\cdot r^4$ vertices. Since $\Cay(\bH(\bZ), T)$ is a Cayley graph and thus vertex-transitive, it suffices to show this fact for the vertex $(0, 0, 0)$. We claim that every vertex $(a, b, c)\in V(\Cay(\bH(\bZ), T))$ at distance at most $r$ from $(0, 0, 0)$ satisfies $\abs{a} \leq r, \abs{b} \leq r$ and $\abs{c} \leq r^2$. Since $(2r+1)(2r+1)(2r^2+1)\le 27\cdot r^4$, this will imply the assertion of the lemma.

    First note that any move along an edge of $\Cay(\bH(\bZ), T)$ does not change the values of the first two coordinates by more than $1$. This immediately implies the bounds on $a$ and $b$. It then follows that any move along an edge between two vertices \emph{inside} the radius-$r$-ball around $(0, 0, 0)$ changes the value of the third coordinate by at most $r$. This implies the bound on $c$.
\end{proof}

The graph $\Cay(\bH(\bZ), T)$ induces a metric on $\bH(\bZ)$ given by the distance in the graph. This metric is precisely the \defn{word metric}, \defn{$d_T$}, on $\bH(\bZ)$: $d_T(x, y)$ is the length of the shortest word in the elements of $T$ that is equal to $x^{-1} y$. The word metric is left-invariant and the spaces $(\bH(\bZ), \dH)$ and $(\bH(\bZ), d_T)$ are bi-Lipschitz equivalent (cf.~\cite[p.~296]{CKN11}).

\subsection{Haar measure on the continuous Heisenberg group}\label{subsec:Haar}

The quantitative central collapse theorem of Cheeger, Kleiner, and Naor~\cite[Thm.~1.1]{CKN11} requires the notion of measures on the Heisenberg group as well as on specific subsets of the group.
The Haar measure, \defn{$\mu$}, on $\bH$ coincides with the standard 3-dimensional Lebesgue measure, $\cL_3$.

\begin{lem}\label{lem:Haar-ball}
    For all $z \in \bH$ and $r>0$,
    \begin{equation*}
        \mu(B_{\dH}(z, r)) \asymp r^4.
    \end{equation*}
\end{lem}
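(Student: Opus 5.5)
The plan is to use three facts: left-invariance of $\mu$, the homothety $A_R$, and the comparison \eqref{eq:dH}. Together they reduce the statement to a single ball of radius $1$ centred at the identity.

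\emph{Step 1: reduce to the identity.} Left translation $L_z\colon x\mapsto z\cdot x$ is an isometry of $\dH$, since the Carnot--Carath\'eodory metric is left-invariant. Hence $B_{\dH}(z,r)=L_z B_{\dH}(0,r)$. From \eqref{eq:product}, $L_z$ is an affine map of $\bR^3$ whose linear part is unipotent (triangular with ones on the diagonal), so it has Jacobian $1$. Therefore $\mu(B_{\dH}(z,r))=\mu(B_{\dH}(0,r))$.

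\emph{Step 2: reduce to radius $1$.} Because $A_R$ is an $R$-homothety, $A_R B_{\dH}(0,1)=B_{\dH}(0,R)$. The Jacobian of $A_R$ is $R\cdot R\cdot R^2=R^4$. Hence $\mu(B_{\dH}(0,r))=r^4\,\mu(B_{\dH}(0,1))$, and it only remains to check that $0<\mu(B_{\dH}(0,1))<\infty$.

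\emph{Step 3: bound the unit ball.} Apply \eqref{eq:dH} with the second point equal to $0$. This gives $\dH((a,b,c),0)\asymp \sqrt{a^2+b^2}+\abs{c}^{1/2}$, so there are absolute constants $0<\kappa_1\le\kappa_2$ with
\begin{equation*}
  \set{(a,b,c): \sqrt{a^2+b^2}+\abs{c}^{1/2}<\kappa_1}\subset B_{\dH}(0,1)\subset\set{(a,b,c): \sqrt{a^2+b^2}+\abs{c}^{1/2}<\kappa_2}.
\end{equation*}
The inner set contains the box $\abs{a},\abs{b}\le\kappa_1/4$, $\abs{c}\le\kappa_1^2/4$, which has positive Lebesgue measure. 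The outer set is contained in the box $\abs{a},\abs{b}\le\kappa_2$, $\abs{c}\le\kappa_2^2$, which has finite measure. Both bounds are absolute constants, and this finishes the argument.

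There is no serious obstacle here. The only point needing care is the Jacobian computation for left translation in Step 1. Alternatively, one can skip Step 1 and apply \eqref{eq:dH} directly at a general centre $z=(a',b',c')$: the change of variables $c\mapsto c-c'+ab'-ba'$ at fixed $(a,b)$ is a shear, so it preserves $\cL_3$ and yields the same box comparison scaled by $r$.
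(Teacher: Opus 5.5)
Your argument is correct, but it is organised differently from the paper's. The paper uses neither left-invariance nor the dilation $A_R$ here. It applies \eqref{eq:dH} directly at an arbitrary centre $z=(a,b,c)$ and radius $r$. This sandwiches $B_{\dH}(z,r)$ between two unions of vertical intervals centred at height $c+ab'-ba'$, of length $\asymp r^2$, taken over squares of side $\asymp r$ in the $(a',b')$-plane; this is the inclusion \eqref{eq:ball}. The paper then reads off the measures $8r^4/C^3$ and $8C^3r^4$ fibre by fibre, which is essentially the shear alternative you mention in your last sentence.

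Your route instead goes through the symmetries of $\bH$: translations have Jacobian $1$ and dilations have Jacobian $R\cdot R\cdot R^2=R^4$. This gives the exact identity $\mu(B_{\dH}(z,r))=\mu(B_{\dH}(0,1))\,r^4$ and shows transparently that the exponent $4$ is the homogeneous dimension. You then need \eqref{eq:dH} only to show that the unit ball has positive finite measure.

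The cost is that you rely on exact left-invariance of $\dH$. This is standard for the Carnot--Carath\'eodory metric, but the paper records only \eqref{eq:dH}, \eqref{eq:dHv} and the homothety property. You could avoid it: \eqref{eq:dH} says $\dH(x,y)\asymp N(x^{-1}y)$ with $N(a,b,c)=\sqrt{a^2+b^2}+\abs{c}^{1/2}$, so you can run your argument on $N$-balls and lose only constants.

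The paper's direct version also has a practical payoff. The explicit sandwich \eqref{eq:ball}, with its constant $C$, is reused in the proof of \cref{cor:quant-centr-collapse} to lower-bound $\mu(S)$. With your approach, that inclusion would have to be recovered separately by translating and dilating your unit-ball boxes.
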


\begin{proof}
    For reals $x$ and $y$, we use \defn{$[x \pm y]$} to denote the interval $[x - y, x + y]$. Let $z = (a, b, c)$. 
    Equation~\eqref{eq:dH} implies that there is an absolute constant $C > 0$ such that
    \begin{equation}\label{eq:ball}
        \begin{aligned}
            & \bigcup_{\substack{a' \in [a \pm r/C], \\ b' \in [b \pm r/C]}} \set{a'} \times \set{b'} \times [c + ab' - ba' \pm r^2/C] \subset B_{\dH}(z, r) \\
            \subseteq & \bigcup_{\substack{a' \in [a \pm C r], \\ b' \in [b \pm C r]}} \set{a'} \times \set{b'} \times [c + ab' - ba' \pm C r^2].
        \end{aligned}
    \end{equation}
    The first union has Lebesgue measure $(2r/C)^2 \times (2r^2/C) = 8r^4/C^3$ and the second has Lebesgue measure $8 C^3 r^4$. This implies the claimed result.
\end{proof}

On cosets of the centre of $\bH$ (viewed as the $z$-axis
in $\bR^3$), let $\mu$ be the 1-dimensional Lebesgue measure, $\cL_1$. While this is an abuse of notation, it will always be clear from context to which measure $\mu$ is referring.

\begin{lem}\label{lem:Haar-coset}
    For all $x \in \bH$ and $r > 0$,
    \begin{equation*}
        \mu(x \cdot Z(\bH) \cap B_{\dH}(x, r)) = 2r^2.
    \end{equation*}
\end{lem}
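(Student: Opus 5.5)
The plan is to identify the set $x \cdot Z(\bH) \cap B_{\dH}(x, r)$ explicitly as a segment of the vertical line through $x$, using the exact formula~\eqref{eq:dHv} rather than the coarse estimate~\eqref{eq:dH}.

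Write $x = (a, b, c)$. The coset $x \cdot Z(\bH)$ is the vertical line $\set{a} \times \set{b} \times \bR$, as noted above. So every point $y$ of the coset has the form $y = (a, b, c')$ for some $c' \in \bR$. By~\eqref{eq:dHv} we have $\dH(x, y) = \abs{c - c'}^{1/2}$. Hence $y \in B_{\dH}(x, r)$ if and only if $\abs{c - c'}^{1/2} \le r$, that is, $c' \in [c - r^2, c + r^2]$.

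It follows that
\[
    x \cdot Z(\bH) \cap B_{\dH}(x, r) = \set{a} \times \set{b} \times [c - r^2, c + r^2].
\]
Its one-dimensional Lebesgue measure $\cL_1$ along the coset is therefore $2r^2$.

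The only point requiring care is whether the ball is taken open or closed. The endpoints $c' = c \pm r^2$ form a $\cL_1$-null set, so the value $2r^2$ is the same either way. Beyond this there is no real obstacle: the statement is an exact computation, available precisely because~\eqref{eq:dHv} is an exact identity for points on a common central coset.
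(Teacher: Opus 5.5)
Your proposal is correct and follows the paper's proof: write $x=(a,b,c)$, use the exact identity \eqref{eq:dHv} to identify the intersection as $\{a\}\times\{b\}\times[c-r^2,c+r^2]$, and read off its $\cL_1$-measure $2r^2$. Your remark that the open/closed convention for the ball changes only a null set is a harmless detail the paper leaves implicit.
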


\begin{proof}
    Let $x = (a, b, c)$ and so $x \cdot Z(\bH) = \set{a} \times \set{b} \times \bR$. By \eqref{eq:dHv},
    \begin{equation*}
        x \cdot Z(\bH) \cap B_{\dH}(x, r) = \set{a} \times \set{b} \times [c - r^2, c + r^2]
    \end{equation*}
    and so $\mu(x \cdot Z(\bH) \cap B_{\dH}(x, r)) = \cL_1([c - r^2, c + r^2]) = 2r^2$.
\end{proof}

On pairs $(x, y)$ that lie in the same coset of the centre (that is, $x^{-1} y \in \bZ(\bH)$), the corresponding measure is the 2-dimensional Lebesgue measure, $\cL_2$.
Recall that the cosets of the centre ($\set{a} \times \set{b} \times \bR$, for $a, b \in \bR$) are parametrised by $\bR^2$. Let $F$ be a set of pairs $(x, y)$ such that $x$ and $y$ lie in the same coset of the centre. In particular, $F$ is partitioned into $F_{a, b} \coloneqq \set{(x, y) \in F \colon x, y \in \set{a} \times \set{b} \times \bR}$ ($a, b \in \bR$). We define a measure on such subsets by
\begin{equation*}
    \mu(F) \coloneqq \int_{\bR^2} \cL_2(F_{a, b}) \ \textrm{d} \cL_2(a, b).
\end{equation*}

\section{Quantitative central collapse}\label{sec:collapse}

With the relevant notions set in place, we can now formally state the aforementioned theorem of Cheeger, Kleiner, and Naor~\cite[Thm.~1.1]{CKN11}.

\begin{thm}[Quantitative central collapse]\label{thm:QCC}
    There is a universal constant $\delta \in (0, 1)$ such that for every $p \in \bH$, every $1$-Lipschitz $f \colon B_{\dH}(p, 1) \to L_1$, and every $\eps \in (0, 1/4)$, there is $r \geq \eps/2$ such that, with respect to the measure $\mu$ defined in \cref{subsec:Haar}, for at least half of the points $x \in B_{\dH}(p, 1/2)$, at least half of 
    \begin{equation*}
        \set[\big]{(x_1, x_2) \in B_{\dH}(x, r) \times B_{\dH}(x, r) \colon x_1^{-1}x_2 \in Z(\bH), \, \dH(x_1, x_2) \in \bigl[\tfrac{1}{2}\eps r, \tfrac{3}{2} \eps r\bigr]}
    \end{equation*}
    satisfy
    \begin{equation*}
        \frac{\norm{f(x_1) - f(x_2)}_{L_1}}{\dH(x_1, x_2)} \leq \bigl(\log(1/\eps)\bigr)^{-\delta}.
    \end{equation*}
\end{thm}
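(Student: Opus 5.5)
This statement is \cite[Thm.~1.1]{CKN11}, and I would not reprove it. The plan is to check that the formulation above is a faithful transcription of the original, with the measures as set up in \cref{subsec:Haar}. Concretely, I would take the theorem as stated by Cheeger, Kleiner and Naor for a $1$-Lipschitz map $f$ from the unit ball $B_{\dH}(p,1)$ into $L_1$. I would then match three things: their scale parameter $\eps\in(0,1/4)$, their lower bound $r\ge \eps/2$, and their exponent $\delta$ with the universal constant appearing there.

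The substantive point to verify is that ``at least half'' refers to the same measures in both statements. For the outer quantifier this is the Haar measure $\mu=\cL_3$ restricted to $B_{\dH}(p,1/2)$, which has positive and finite measure by \cref{lem:Haar-ball}. For the inner quantifier it is the measure on vertical pairs defined at the end of \cref{subsec:Haar}: integrate over cosets $\{a\}\times\{b\}\times\bR$ the $\cL_2$-measure of the fibre $F_{a,b}$. I would check that the set of pairs $(x_1,x_2)$ in the statement has positive finite measure in this sense, so that ``half'' is meaningful. For $x_1$ in a fixed coset, \eqref{eq:dHv} gives that the admissible $x_2$ form two intervals of vertical length of order $(\eps r)^2$. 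Together with \eqref{eq:ball}, which says the coset slices of $B_{\dH}(x,r)$ have length of order $r^2$, this gives total measure of order $r^2\cdot r^2\cdot(\eps r)^2$, which is nonzero and finite. If CKN instead phrase their statement with normalised probability measures, this computation shows the two formulations agree, since normalisation does not change ratios.

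If the original is stated for general $p$ only up to left translation, or only at a different fixed scale, I would reduce to the form above. Left translation is an isometry of $\dH$ preserving $\mu$ and the centre cosets. The dilation $A_R$ scales $\dH$ by $R$, maps cosets of $Z(\bH)$ to cosets, and scales the measures by fixed powers of $R$. The quotient $\norm{f(x_1)-f(x_2)}_{L_1}/\dH(x_1,x_2)$ is invariant once $f$ is rescaled to remain $1$-Lipschitz. The only possible obstacle is this bookkeeping of conventions in the source: which measure is used on pairs, and the exact constants $1/2$ and $3/2$ in the distance window. I expect it to be routine, and in any case absorbable into the universal constant $\delta$ and the absolute constants implicit in $\asymp$.
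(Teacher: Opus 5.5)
Your proposal matches the paper, which also quotes this statement verbatim from \cite[Thm.~1.1]{CKN11} without proof. Your check that the set of vertical pairs has positive, finite measure, of order $r^2\cdot r^2\cdot(\eps r)^2$, is a harmless addition. Your fallback of absorbing a different distance window into $\delta$ is not needed, and would not work in general, but the transcription is faithful as stated.
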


We now deduce a slightly weaker form of \cref{thm:QCC} that will be more convenient to apply and sufficient for our purposes.

\begin{cor}\label{cor:quant-centr-collapse}
    For all $\gamma > 0$ there is $\eps_0 > 0$ such that for every $1$-Lipschitz $f \colon (\bH, \dH) \to L_1$ there is $x \in \bH$ and $r \geq \eps_0$ such that, with respect to the measure $\mu$ defined in \cref{subsec:Haar}, an absolute positive constant proportion of $y \in x \cdot Z(\bH) \cap B_{d^\bH}(x, r)$ satisfy
    \begin{equation*}
        \norm{f(x) - f(y)}_{L_1} \leq \gamma r.
    \end{equation*}
\end{cor}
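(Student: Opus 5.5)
Apply \cref{thm:QCC} with $p=(0,0,0)$ to the restriction of $f$ to $B_{\dH}(p,1)$. Given $\gamma>0$, choose $\eps\in(0,1/4)$ so small that $(\log(1/\eps))^{-\delta}\cdot\tfrac32\eps\le\gamma/4$. This holds as soon as $\eps$ is small, since the left-hand side is at most $\tfrac32\eps$. Then set $\eps_0$ to be a small multiple of $\eps^2$, to be fixed below. The theorem gives a scale $r'\ge\eps/2$ such that for at least half (in $\mu$-measure) of the points $x'\in B_{\dH}(p,1/2)$, at least half of the set $P(x')$ of central pairs $(x_1,x_2)\in B_{\dH}(x',r')^2$ with $\dH(x_1,x_2)\in[\tfrac12\eps r',\tfrac32\eps r']$ satisfy
\begin{equation*}
\norm{f(x_1)-f(x_2)}_{L_1}\le(\log(1/\eps))^{-\delta}\dH(x_1,x_2)\le\tfrac{\gamma}{4}\eps r'.
\end{equation*}
Call these pairs \emph{good}. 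We will take $r\asymp\eps r'$, so that $r\ge\eps^2/2$ up to constants. This gives the required lower bound $r\ge\eps_0$.

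The main step is to pass from ``half of the pairs are good'' to a single base point $x$ such that a constant proportion of $y$ in its central interval are close to $f(x)$. Fix one good $x'$. The set $P(x')$ is a union over cosets $\{a\}\times\{b\}\times\bR$ meeting $B_{\dH}(x',r')$. By \eqref{eq:dHv}, on each coset the pairs $(x_1,x_2)$ are those with $|c_1-c_2|\in[\tfrac14\eps^2r'^2,\tfrac94\eps^2r'^2]$ and both points in the ball. By \eqref{eq:ball}, each such coset slice is an interval of length $\asymp r'^2$, which is much longer than $\eps^2 r'^2$ once $\eps$ is small. We measure $P(x')$ via $\mu$ as $\int\cL_2$. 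For a cross-section that meets the ball with a slice of length $\gtrsim r'^2$ (a constant proportion of the $(a,b)$-area), the pair set has $\cL_2$-measure $\asymp r'^2\cdot\eps^2r'^2$. Hence $\mu(P(x'))\asymp r'^2\cdot r'^2\cdot\eps^2 r'^2$. The thin ends of the ball contribute at most a comparable amount, so a Fubini/averaging argument applies. Since at least half of $P(x')$ is good, there is a point $x_1$ in the ball for which the set of $x_2$ with $(x_1,x_2)$ good has $\cL_1$-measure $\gtrsim\eps^2r'^2$. That is, the measure is a constant proportion of the admissible window $\{x_2\in x_1Z(\bH):|c_2-c_1|\le\tfrac94\eps^2r'^2\}$.

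Now set $x=x_1$ and $r=\tfrac32\eps r'$. By \eqref{eq:dHv} and \cref{lem:Haar-coset}, $x\cdot Z(\bH)\cap B_{\dH}(x,r)$ is exactly the interval $|c_2-c_1|\le r^2$ of measure $2r^2$. The good $x_2$ all lie in it and have measure $\gtrsim\eps^2r'^2\asymp r^2$, which is an absolute proportion. Each good $y=x_2$ satisfies $\norm{f(x)-f(y)}_{L_1}\le\tfrac{\gamma}{4}\eps r'\le\gamma r$. The domain issue is harmless: $f$ is defined on all of $\bH$, and we only used its restriction.

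The obstacle I expect is the measure bookkeeping in the averaging step. The difficulty is making sure the ``absolute proportion'' does not secretly depend on $\eps$. The pair-set measure $\mu(P(x'))$ scales like $r'^4\cdot\eps^2r'^2$, not $r'^4\cdot r'^2$, and this is exactly compensated by the per-point window being of size $\eps^2r'^2$. To handle this cleanly, I would write $\mu(P(x'))=\int_{x_1\in B}\cL_1(\{x_2:(x_1,x_2)\in P(x')\})\,d\mu(x_1)$ and bound the integrand above by $\tfrac92\eps^2r'^2$ pointwise. I would also show it is $\gtrsim\eps^2r'^2$ on a constant fraction of $B$, using \eqref{eq:ball} and \cref{lem:Haar-ball}. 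Then the pigeonhole on good pairs gives the claimed $x_1$.
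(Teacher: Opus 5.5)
Your overall route matches the paper's proof. You take a good centre from \cref{thm:QCC} and bound the central-pair set below by $\gtrsim\eps^2r'^6$ using the inner box in \eqref{eq:ball}. You then disintegrate over the first point and average against $\mu(B_{\dH}(x',r'))\asymp r'^4$ (\cref{lem:Haar-ball}), which gives a base point whose good fibre has measure $\gtrsim\eps^2r'^2$. Finally you compare with \cref{lem:Haar-coset} at scale $r\asymp\eps r'$. The pointwise upper bound on fibres you mention is not needed for this averaging.

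One step fails as written: the choice of $\eps$. Your condition $(\log(1/\eps))^{-\delta}\cdot\tfrac32\eps\le\gamma/4$ only gives
$\norm{f(x_1)-f(x_2)}_{L_1}\le(\log(1/\eps))^{-\delta}\cdot\tfrac32\eps r'\le\tfrac{\gamma}{4}r'$.
It does not give $\le\tfrac{\gamma}{4}\eps r'$ as you display. With $r=\tfrac32\eps r'$, the bound $\tfrac{\gamma}{4}r'$ equals $\tfrac{\gamma}{6\eps}r$, which is useless.

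Your justification (``the left-hand side is at most $\tfrac32\eps$'') puts the smallness in the factor $\eps$. That factor is exactly what gets absorbed when you rescale to $r\asymp\eps r'$: every $1$-Lipschitz map already satisfies $\norm{f(x_1)-f(x_2)}_{L_1}\le\dH(x_1,x_2)\le r$ for free. The gain by $\gamma$ must come from the collapse ratio itself. So you need $(\log(1/\eps))^{-\delta}\le\gamma/6$, which is achievable because $(\log(1/\eps))^{-\delta}\to0$ as $\eps\to0$. The paper imposes $(\log(1/\eps))^{-\delta}\le\gamma$ with $r=2\eps\rho$.

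With that correction, plus $\eps$ small enough that $\tfrac94\eps^2r'^2\le r'^2/C$ (which you already allow for), your argument goes through. It yields the corollary with $\eps_0=\tfrac34\eps^2$.
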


\begin{proof}
    Fix $\gamma > 0$.
    Let $\eps > 0$ be sufficiently small and, in particular, satisfy $(\log(1/\eps))^{-\delta} \leq \gamma$ and $(\frac{3}{2}\varepsilon)^2\le 1/C$ where $\delta > 0$ is the universal constant given by \cref{thm:QCC} and $C>0$ denotes the absolute constant from the inclusion~\eqref{eq:ball}. 
    Define $\eps_0 \coloneqq \eps^2$.
    Let $f \colon (\bH, \dH) \to L_1$ be 1-Lipschitz.
    
    By \cref{thm:QCC}, there is $\rho \geq \eps/2$ and a point $z \in B_{\dH}((0, 0, 0), 1/2)$ such that $\mu(S^{\leq}) \geq \mu(S)/2$ where
    \begin{align*}
        S & \coloneqq \set{(x, y) \in B_{\dH}(z, \rho) \times B_{\dH}(z, \rho) \colon y \in x \cdot Z(\bH), \, \dH(x, y) \in [\tfrac{1}{2}\eps \rho, \tfrac{3}{2} \eps \rho]}, \\
        S^{\leq} & \coloneqq \set{(x, y) \in S \colon \norm{f(x) - f(y)}_{L_1} \leq \gamma \cdot \tfrac{3}{2} \eps \rho}.
    \end{align*}
    Set $r \coloneqq 2 \eps \rho$ and note that $r \geq \eps^2 = \eps_0$.
    We now show that $\mu(S) \gtrsim r^2 \rho^4$. 
    Let $z = (a, b, c)$.
    By \eqref{eq:ball} and \eqref{eq:dHv}, $S$ contains
    \begin{align*}
        \set[\Big]{\bigl((a', b', c'), (a', b', c'')\bigr) \colon & a' \in [a \pm \rho/C], \, b' \in [b \pm \rho/C], \\
        & c', c'' \in [c + ab' - ba' \pm \rho^2/C], \abs{c' - c''} \in \bigl[(\tfrac{1}{4}r)^2, (\tfrac{3}{4} r)^2\bigr]}.
    \end{align*}
    By our initial choice of $\eps > 0$, we have $(\frac{3}{2} \eps)^2 \leq 1/C$ and hence $(\tfrac{3}{4} r)^2 \leq \rho^2/C$. It then follows that the above set has Lebesgue measure at least $(2\rho/C) \times (2\rho/C) \times (\rho^2/C) \times (\tfrac{8}{16} r^2) = 2 r^2 \rho^4/C^3$. Hence, we indeed have $\mu(S) \gtrsim r^2 \rho^4$.

    Define, for $x \in B_{\dH}(z, \rho)$,
    \begin{equation*}
        T_x \coloneqq \set[\big]{y \in B_{\dH}(z, \rho) \colon y \in x \cdot Z(\bH), \, \dH(x, y) \in [\tfrac{1}{4} r, \tfrac{3}{4} r], \, \norm{f(x) - f(y)}_{L_1} \leq \gamma \cdot \tfrac{3}{4} r},
    \end{equation*}
    so $S^{\leq} = \bigcup_{x \in B_{\dH}(z, \rho)} (\set{x} \times T_x)$. Recall that $\mu(S^{\leq}) \geq \mu(S)/2 \gtrsim r^2 \rho^4$ and so
    \begin{align*}
        r^2 \rho^4 \lesssim \mu(S^{\leq}) = \int_{x \in B_{\dH}(z, \rho)}\mu(T_x) \ \textrm{d}\cL_3(x).
    \end{align*}
    Thus, by averaging, there is some $x \in B_{\dH}(z, \rho)$ such that
    \begin{equation*}
        \mu(T_x) \gtrsim \tfrac{r^2 \rho^4}{\mu(B_{\dH}(z, \rho))} \overset{\ref{lem:Haar-ball}}{\asymp} r^2.
    \end{equation*}
    Finally, note that every $y \in T_x$ is in $x \cdot Z(\bH) \cap B_{\dH}(x, r)$ and satisfies $\norm{f(x) - f(y)}_{L_1} \leq \gamma r$ so
    \begin{align*}
        & \frac{\mu(\set{y \in x \cdot Z(\bH) \cap B_{\dH}(x, r) \colon \norm{f(x) - f(y)}_{L_1} \leq \gamma r})}{\mu(x \cdot Z(\bH) \cap B_{\dH}(x, r))} \\
        \geq & \ \frac{\mu(T_x)}{\mu(x \cdot Z(\bH) \cap B_{\dH}(x, r))} \overset{\ref{lem:Haar-coset}}{=} \frac{\mu(T_x)}{2r^2} \gtrsim 1,
    \end{align*}
    as required.
\end{proof}

\section{Discretising}\label{sec:discrete}

The following extension lemma follows from a general result of~\cite{LN05}, which states that such extensions are possible for any Banach-space-valued function from any doubling subset of a metric space to the entire metric space. It also follows from a straightforward partition-of-unity argument.

\begin{lem}\label{lem:extend}
    There is a universal constant $\alpha > 0$ such that if $f \colon (\bH(\bZ), d_T) \to L_1$ is $1$-Lipschitz, then there is $\tilde{f} \colon (\bH, \dH) \to L_1$ which extends $f$ and is $\alpha$-Lipschitz.
\end{lem}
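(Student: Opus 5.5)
We construct $\tilde f$ by a partition-of-unity argument adapted to the lattice $\bH(\bZ)$ inside $\bH$. Two ingredients are used throughout. First, $(\bH(\bZ), d_T)$ and $(\bH(\bZ), \dH)$ are bi-Lipschitz equivalent, so $f$ is $L$-Lipschitz with respect to $\dH$ on $\bH(\bZ)$ for an absolute constant $L$. Second, $\bH(\bZ)$ is a cocompact lattice: there is an absolute $R_0$ such that every $x \in \bH$ satisfies $\dH(x, \bH(\bZ)) \le R_0$. This holds because $\bH(\bZ)$ acts on $\bH$ by left translations, which are $\dH$-isometries, and the bounded set $[0,1)^3$ is a fundamental domain up to bounded error.

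\textbf{Construction.} Fix a bump $\phi(t) = \max(0, 1 - t/(2R_0))$. For $g \in \bH(\bZ)$ set $\psi_g(x) = \phi(\dH(x, g))$. Each $\psi_g$ is $1/(2R_0)$-Lipschitz and supported in $B_{\dH}(g, 2R_0)$. Put $\Psi(x) = \sum_g \psi_g(x)$ and $\theta_g = \psi_g / \Psi$.

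Every $x$ has some lattice point $g$ with $\dH(x,g) \le R_0$, so $\Psi(x) \ge 1/2$. By \cref{lem:Haar-ball} and left-invariance of $\mu$, the number of lattice points in any ball $B_{\dH}(x, 3R_0)$ is at most an absolute constant $N$. This uses that the translates $g[0,1)^3$ are disjoint, have measure $1$, and lie within bounded distance of $g$. Hence $\Psi \le N$ and $\Psi$ is $N/(2R_0)$-Lipschitz, so each $\theta_g$ is $O(1)$-Lipschitz.

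Define $\tilde f(x) = \sum_g \theta_g(x) f(g)$ for $x \notin \bH(\bZ)$. On $\bH(\bZ)$ itself, keep $\tilde f = f$.

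\textbf{Lipschitz estimate.} First take $x, y$ with $\dH(x,y) \le R_0$. Choose a lattice point $g_0$ within $R_0$ of $x$. Since $\sum_g \theta_g \equiv 1$,
\[
\tilde f(x) - \tilde f(y) = \sum_g \bigl(\theta_g(x) - \theta_g(y)\bigr)\bigl(f(g) - f(g_0)\bigr).
\]
Only the at most $2N$ lattice points within $3R_0$ of $x$ contribute. For each such $g$ we have $\norm{f(g) - f(g_0)}_{L_1} \le L \cdot 4R_0$ and $|\theta_g(x) - \theta_g(y)| = O(\dH(x,y))$. This gives $\norm{\tilde f(x) - \tilde f(y)}_{L_1} \lesssim \dH(x,y)$.

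Now take $\dH(x,y) > R_0$. Then
\[
\norm{\tilde f(x) - f(g_x)}_{L_1} \le \max_{\theta_g(x) > 0} \norm{f(g) - f(g_x)}_{L_1} = O(R_0),
\]
and similarly for $y$. Therefore
\[
\norm{\tilde f(x) - \tilde f(y)}_{L_1} \le O(R_0) + L\,\dH(g_x, g_y) \lesssim \dH(x,y).
\]

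\textbf{Lattice points.} The formula at a lattice point $x = g_0$ does not return $f(g_0)$, because nearby bumps also contribute. This is the main subtlety. There are two ways to handle it:
\begin{itemize}
\item Shrink the bumps so they separate lattice points. Choose $\phi$ supported on radius $\eta$, where $\eta$ is half the minimal $\dH$-distance between lattice points. Then supplement with a second family of bumps at a net of non-lattice points to cover the gaps, taking the value of $f$ at a nearest lattice point there. The same estimates go through with worse constants.
\item Keep the formula at non-lattice points and check that setting $\tilde f = f$ on $\bH(\bZ)$ remains Lipschitz near $g_0$. This holds because $\theta_g(x) \to \theta_g(g_0)$ does not give $f(g_0)$ in general, so continuity fails unless the bumps are separated.
\end{itemize}
Since the second option fails, we use the first: separated bumps of radius $\eta$ around lattice points, plus auxiliary net bumps. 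The Lipschitz verification is then identical, and the result is an extension with $\alpha$ absolute. Alternatively, one can simply invoke the Lee--Naor extension theorem~\cite{LN05}, since $\bH(\bZ)$ is a doubling subset of $(\bH, \dH)$ by \cref{lem:Haar-ball}.
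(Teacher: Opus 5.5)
Your proposal is correct in substance and follows the two routes the paper itself indicates. The paper's proof is only an appeal to the Lee--Naor extension theorem for doubling subsets, which is your closing remark. It also mentions that a partition-of-unity argument works, and that is what you carry out. Your estimates for $F(x)\coloneqq\sum_g\theta_g(x)f(g)$ are fine: bounded overlap via \cref{lem:Haar-ball}, $\Psi\ge 1/2$, and both Lipschitz regimes.

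The step you only assert is the one you flag yourself: that the modified $\tilde f$ actually agrees with $f$ on $\bH(\bZ)$. In your first option this requires every bump that is positive at a lattice point $g_0$ to carry the value $f(g_0)$. That holds provided \emph{all} bumps, including the auxiliary net bumps, have support radius at most $\eta$: then $\dH(p,g_0)<\eta$ forces $g_0$ to be the unique nearest lattice point of $p$. You also need the net to be fine enough (say an $\eta/2$-net) to keep $\Psi$ bounded below. Both points should be stated.

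A shorter fix reuses what you have already proved. You showed that $F$ is $O(1)$-Lipschitz on all of $\bH$ and that $\norm{F(g)-f(g)}_{L_1}\le 2LR_0$ for $g\in\bH(\bZ)$. So put
\[
\tilde f \coloneqq F+\sum_{g\in\bH(\bZ)}\beta_g\,\bigl(f(g)-F(g)\bigr),\qquad \beta_g(x)\coloneqq\max\bigl(0,1-\dH(x,g)/\eta\bigr).
\]
The $\beta_g$ have pairwise disjoint supports and $\beta_g(g')=0$ for $g'\ne g$, so $\tilde f=f$ on the lattice. The correction term is $O(1/\eta)$-Lipschitz: if $x,y$ lie in the supports of different $\beta_g,\beta_{g'}$, use $\beta_g(x)=\beta_g(x)-\beta_g(y)\le\dH(x,y)/\eta$, and likewise for $y$.

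Finally, your second bullet contradicts itself (``This holds because \dots\ continuity fails'') and should simply be deleted.
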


The following lemma gives an important rounding function which bridges between $\bH$ and $\bH(\bZ)$.

\begin{lem}\label{lem:round}
    There is a function $\round \colon \bH \to \bH(\bZ)$ such that
    \begin{itemize}[noitemsep]
        \item $\round$ is the identity on $\bH(\bZ)$,
        \item $\dH(x, \round(x)) \lesssim 1$ for all $x \in \bH$,
        \item if $y \in x \cdot Z(\bH)$, then $\round(y) \in \round(x) \cdot Z(\bH(\bZ))$.
    \end{itemize}
\end{lem}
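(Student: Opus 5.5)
The natural candidate is to round the first two coordinates to the nearest integers and then round the third coordinate so that the point lands in the lattice $\bZ^3$. Concretely, for $x = (a, b, c)$ I would define
\begin{equation*}
    \round(a, b, c) \coloneqq \bigl(\rou{a}, \rou{b}, \rou{c}\bigr).
\end{equation*}
The first bullet is then immediate, since $\rou{n} = n$ for integers $n$. The third bullet is also immediate. If $y \in x \cdot Z(\bH)$, then $y = (a, b, c')$ differs from $x$ only in the third coordinate. Hence $\round(y) = (\rou{a}, \rou{b}, \rou{c'})$ and $\round(x)$ share their first two coordinates. Both are integer points, so $\round(x)^{-1}\round(y) = (0, 0, \rou{c'} - \rou{c}) \in Z(\bH(\bZ))$.

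The only step needing care is the second bullet, $\dH(x, \round(x)) \lesssim 1$. For this I would apply \eqref{eq:dH} with $x = (a, b, c)$ and $\round(x) = (a', b', c')$, where $\abs{a - a'}, \abs{b - b'}, \abs{c - c'} \le 1/2$. The horizontal term is at most $\sqrt{1/2}$. The vertical term is $\abs{c - c' + ab' - ba'}^{1/2}$. The potential obstacle is that $ab' - ba'$ involves the unbounded quantities $a, b$. However, writing $a' = a + s$ and $b' = b + t$ gives $ab' - ba' = at - bs$, which is unbounded when $\abs{a}$ or $\abs{b}$ is large. So the naive coordinatewise rounding fails, and this is the main obstacle.

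To fix it, I keep the first two coordinates as $a' = \rou{a}$ and $b' = \rou{b}$, but choose the third so as to cancel the twist. That is, I set
\begin{equation*}
    c' \coloneqq \rou{c + ab' - ba'},
\end{equation*}
which is an integer. Then $\abs{c - c' + ab' - ba'} \le 1/2$, so by \eqref{eq:dH} we get $\dH(x, \round(x)) \lesssim \sqrt{1/2} + \sqrt{1/2} \lesssim 1$.

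I then recheck the other two bullets for this modified map. On integer points, $a' = a$ and $b' = b$, so $ab' - ba' = 0$ and $c' = c$; the map is the identity. For the coset property, if $y = (a, b, \tilde c)$, then $\round(y)$ has the same first two coordinates $\rou{a}, \rou{b}$ as $\round(x)$. Two integer points with equal first two coordinates differ by a central integer element, so $\round(y) \in \round(x) \cdot Z(\bH(\bZ))$. This completes the plan; the only nontrivial idea is absorbing the term $ab' - ba'$ into the rounding of the third coordinate.
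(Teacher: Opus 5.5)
Your proposal is correct and matches the paper's proof: it uses the same map $\round(a,b,c) = \bigl(\rou{a}, \rou{b}, \rou{c + ab' - ba'}\bigr)$, where the third coordinate is chosen to absorb the twist term $ab' - ba'$. It also checks the three bullets exactly as the paper does, via \eqref{eq:dH} for the distance bound and the unchanged first two coordinates for the coset property.
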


\begin{proof}
 Note that $\abs{a - \rou{a}} \leq 1/2$ for all $a \in \bR$. Fix $x = (a, b, c) \in \bR^3$. Let $a' = \rou{a}$, $b' = \rou{b}$, and $c' = \rou{c + ab' - ba'}$. Define $\round(x) \coloneqq (a', b', c')$. Note that $\round(x) \in \bZ^3$ and, if $a, b, c \in \bZ$, then $\round(x) = x$. In particular, $\round \colon \bH \to \bH(\bZ)$ is the identity on $\bH(\bZ)$. Now, by \eqref{eq:dH},
    \begin{equation*}
        \dH(x, \round(x)) \lesssim \sqrt{(1/2)^2 + (1/2)^2} + \abs{1/2}^{1/2} = \sqrt{2},
    \end{equation*}
    which gives the second bullet point. Finally, if $y = (a', b', c')$ is in $x \cdot Z(\bH)$ where $x = (a, b, c)$, then $(a', b') = (a, b)$ and so $(\rou{a'}, \rou{b'}) = (\rou{a}, \rou{b})$ and hence $\round(y) \in \round(x) \cdot Z(\bH(\bZ))$.
\end{proof}

The following is our discretised version of the quantitative central collapse theorem.

\begin{lem}\label{lem:discretequantitativecollapse}
    For all sufficiently small $\eps > 0$ and $1$-Lipschitz $f \colon (\bH(\bZ), d_T) \to L_1$, there are arbitrarily large $M$ for which there exists $x \in \bH(\bZ)$ and $K \subset x \cdot Z(\bH(\bZ))$ such that $\abs{K} \geq M^2$ and, for all $y, z \in K$,
    \begin{equation*}
        \norm{f(y) - f(z)}_{L_1} \leq \eps M.
    \end{equation*}
\end{lem}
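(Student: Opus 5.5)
The plan is to pass to the continuous group, rescale so that the collapse happens at a large scale, apply \cref{cor:quant-centr-collapse}, and then round back to the lattice. Let $f\colon(\bH(\bZ),d_T)\to L_1$ be $1$-Lipschitz. Since $(\bH(\bZ),\dH)$ and $(\bH(\bZ),d_T)$ are bi-Lipschitz equivalent, \cref{lem:extend} (applied after rescaling the target) gives an extension $\tilde f\colon(\bH,\dH)\to L_1$ that is $\beta$-Lipschitz for a universal $\beta$.

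\textbf{Rescaling.} Fix a large parameter $R$ and set $f_R\coloneqq \tfrac{1}{\beta R}\,\tilde f\circ A_R$. Because $A_R$ is an $R$-homothety, $f_R$ is $1$-Lipschitz on $(\bH,\dH)$. Apply \cref{cor:quant-centr-collapse} to $f_R$ with a small $\gamma$ (chosen depending on $\eps$). This yields $x_0\in\bH$, $r\ge\eps_0$ and a set $Y_0\subset x_0\cdot Z(\bH)\cap B_{\dH}(x_0,r)$ with $\mu(Y_0)\ge c\,r^2$ for an absolute constant $c>0$, such that $\norm{f_R(x_0)-f_R(y)}\le\gamma r$ for all $y\in Y_0$. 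Pulling back by $A_R$, the point $x_1=A_Rx_0$ and the set $Y_1=A_R Y_0\subset x_1\cdot Z(\bH)$ satisfy $\mu(Y_1)\ge cR^2r^2$ and $\norm{\tilde f(x_1)-\tilde f(y)}\le\beta\gamma Rr$ for $y\in Y_1$. Here $\mu(Y_1)$ gains the factor $R^2$ because the central coordinate scales by $R^2$.

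\textbf{Rounding.} Apply $\round$ from \cref{lem:round}. All of $\round(Y_1)$ lies in the single coset $x\cdot Z(\bH(\bZ))$ with $x=\round(x_1)$. On a central coset, $\round$ only rounds the third coordinate after a fixed shift, so each integer point has a preimage in $Y_1$ of length at most $1$. Hence $K\coloneqq\round(Y_1)$ satisfies $\abs{K}\ge\mu(Y_1)\ge cR^2r^2$. Moreover, for $y\in Y_1$,
\[
\norm{f(\round(y))-\tilde f(y)}\le\beta\,\dH(\round(y),y)\lesssim 1 .
\]
By the triangle inequality through $\tilde f(x_1)$, every pair $y,z\in K$ then satisfies $\norm{f(y)-f(z)}\le 2\beta\gamma Rr+O(1)$.

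\textbf{Choice of parameters.} Set $M\coloneqq\sqrt{c}\,Rr$, so that $\abs{K}\ge M^2$. Then $2\beta\gamma Rr=(2\beta\gamma/\sqrt c)M$, and we choose $\gamma$ so that $2\beta\gamma/\sqrt c\le\eps/2$. Since $r\ge\eps_0$, we have $M\ge\sqrt c\,\eps_0R$. Taking $R$ large therefore makes $M$ arbitrarily large, and in particular makes the additive $O(1)$ error at most $\eps M/2$. This gives the bound $\eps M$ for all $y,z\in K$.

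\textbf{Main obstacle.} The delicate point is the rounding count: $\abs{K}$ must be genuinely comparable to $\mu(Y_1)$. If $Y_1$ were a set of measure $\ge L$ but spread so thinly that it hits few unit intervals, the lower bound on $\abs{K}$ would fail. This is resolved by the explicit form of $\round$ on a central coset. It maps $\{a\}\times\{b\}\times\bR$ to $\{\rou a\}\times\{\rou b\}\times\bZ$ via $c\mapsto\rou{c+\text{const}}$, so the preimage of each point is an interval of length $1$. Consequently the number of points hit is at least the measure (up to an additive $1$, which is harmless).

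The other point to check is that the additive rounding error and the extension constant do not spoil the bound. Both are absorbed because $M\to\infty$ while $\gamma$ is fixed in advance.
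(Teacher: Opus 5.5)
Your proof is correct and follows the paper's route: extend via \cref{lem:extend}, rescale by $A_R$, apply \cref{cor:quant-centr-collapse}, pull back, round, and take $M \asymp Rr$ with $R$ large so the additive rounding error is absorbed. The only real difference is the lower bound on $\abs{K}$. You observe that on a central coset $\round$ acts as $c \mapsto \rou{c + \mathrm{const}}$, so each fibre has length $1$ and $\abs{K} \ge \mu(Y_1)$; this is a cleaner substitute for the paper's covering argument with balls of radius $2c_{\round}$ and \cref{lem:Haar-coset}. Also, because you choose $\gamma$ freely in terms of $\eps$ rather than fixing $\gamma = \eps^2$, your argument works for every $\eps > 0$.
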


\begin{proof}
    Let $c_{\round} > 0$ be an absolute constant such that, for the function $\round$ given in \cref{lem:round}, $\dH(x, \round(x)) < c_{\round}$. Let $\alpha$ be the absolute constant given by \cref{lem:extend}.
    
    We will now prove that there is an absolute constant $C > 0$ such that the following holds for all $\eps > 0$. There is $\eps_0 > 0$ such that, for all 1-Lipschitz $f \colon (\bH(\bZ), d_T) \to L_1$ and $R > 0$, there are $r \geq \eps_0$, $\tilde{x} \in \bH(\bZ)$, and $K \subset \tilde{x} \cdot Z(\bH(\bZ))$ of size at least $(rR/C)^2$ such that
    \begin{equation*}
        \norm{f(\tilde{y}) - f(\tilde{z})}_{L_1} \leq 2 \alpha \eps^2 r R + 2 \alpha c_{\round},
    \end{equation*}
    for every $\tilde{y}, \tilde{z} \in K$. We then deduce the lemma statement at the end.

    Fix $\eps > 0$. Let $\eps_0 > 0$ be the constant obtained when \cref{cor:quant-centr-collapse} is applied to $\gamma = \eps^2$. 
    Let $f \colon (\bH(\bZ), d_T) \to L_1$ be 1-Lipschitz and $R > 0$. 
    By \cref{lem:extend} there is a function $\tilde{f} \colon (\bH, \dH) \to L_1$ which extends $f$ and is $\alpha$-Lipschitz. Define
    \begin{equation*}
        f_R \coloneqq \tfrac{1}{\alpha R} \cdot \tilde{f} \circ A_R \colon (\bH, \dH) \to L_1.
    \end{equation*}
    Since $A_R$ is an $R$-homothety of $(\bH, \dH)$, $f_R$ is $1$-Lipschitz. 
    By \cref{cor:quant-centr-collapse}, there is $r \geq \eps_0$ and $x \in \bH$ such that
    \begin{equation*}
        \norm{f_R(x) - f_R(y)}_{L_1} \leq \eps^2 r
    \end{equation*}
    for a constant proportion of $y \in x \cdot Z(\bH) \cap B_{\dH}(x, r)$. Let
    \begin{equation*}
        \cY \coloneqq \set[\big]{y \in x \cdot Z(\bH) \cap B_{\dH}(x, r) \colon \norm{f_R(x) - f_R(y)}_{L_1} \leq \eps^2 r},
    \end{equation*}
    and so
    \begin{equation*}
        \mu(\cY) \gtrsim \mu\bigl(x \cdot Z(\bH) \cap B_{\dH}(x, r)\bigr) \overset{\ref{lem:Haar-coset}}{=} 2r^2.
    \end{equation*}
    Define
    \begin{align*}
        \cY' & \coloneqq \set{A_R y \colon y \in \cY}, \\
        \tilde{x} & \coloneqq \round(A_R x), \\
        K & \coloneqq \set{\round(y') \colon y' \in \cY'}.
    \end{align*}
    Note that $\tilde{x} \in \bH(\bZ)$, by the definition of $\round$. Next, if $\tilde{y} \in K$, then $\tilde{y} = \round(A_R y)$ for $y \in \cY \subset x \cdot Z(\bH)$. Now $A_R y \in A_R x \cdot Z(\bH)$ and so, by \cref{lem:round}, $\tilde{y} \in \tilde{x} \cdot Z(\bH(\bZ))$. Hence $K \subset \tilde{x} \cdot Z(\bH(\bZ))$.

    We next show that $\abs{K} \geq (rR/C)^2$ for an absolute constant $C > 0$. By definition of $K$ and $c_{\round}$,
    \begin{equation*}
        \cY' \subseteq \bigcup_{\tilde{y} \in K} B_{\dH}(\tilde{y}, c_{\round}).
    \end{equation*}
    For each $\tilde{y} \in K$, choose some $y' \in \cY'$ such that $\round(y') = \tilde{y}$. Let $K'$ be the set of chosen $y'$. Then $\abs{K'} \le \abs{K}$ and, by \cref{lem:round}, $\cY' \subset \bigcup_{y' \in K'} B_{\dH}(y', 2c_{\round})$.
    But then, since $A_{1/R}$ is a $1/R$-homothety,
    \begin{equation*}
        \cY = A_{1/R} \cY' \subset \bigcup_{y' \in K'} B_{\dH}(A_{1/R} y', 2c_{\round}/R).
    \end{equation*}
    Now $\cY \subset x \cdot Z(\bH)$ and so $\cY \subset \cup_{y' \in K'} \bigl(B_{\dH}(A_{1/R} y', 2c_{\round}/R) \cap x \cdot Z(\bH)\bigr)$. Note that $A_{1/R} y' \in x \cdot Z(\bH)$ and so, by \cref{lem:Haar-coset},
    \begin{equation*}
        r^2 \lesssim \mu(\cY) \leq \sum_{y' \in K'} \mu\bigl(B_{\dH}(A_{1/R} y', 2c_{\round}/R) \cap x \cdot Z(\bH)\bigr) \leq \abs{K'} \cdot 2(2 c_{\round}/R)^2,
    \end{equation*}
    from which it follows that $\abs{K} \geq \abs{K'} \geq (rR/C)^2$ for an absolute constant $C > 0$. Finally, let $\tilde{y}, \tilde{z} \in K$ and let $y, z \in \cY$ such that $\tilde{y} = \round(A_R y)$ and $\tilde{z} = \round(A_R z)$. By the definition of $\cY$ and the triangle inequality, $\norm{f_R(y) - f_R(z)}_{L_1} \leq \norm{f_R(x) - f_R(y)}_{L_1} + \norm{f_R(x) - f_R(z)}_{L_1} \leq 2 \eps^2 r$. But then, by the definition of $f_R$,
    \begin{equation*}
        \norm{\tilde{f}(A_Ry) - \tilde{f}(A_Rz)}_{L_1} \leq 2 \alpha \eps^2 r R.
    \end{equation*}
    Now $\tilde{f}$ is $\alpha$-Lipschitz and extends $f$ so, by the triangle inequality,
    \begin{align*}
        \norm{f(\tilde{y}) - f(\tilde{z})}_{L_1} & = \norm{\tilde{f}(\tilde{y}) - \tilde{f}(\tilde{z})}_{L_1} \\ 
        & \leq \norm{\tilde{f}(\tilde{y}) - \tilde{f}(A_R y)}_{L_1} + \norm{\tilde{f}(A_R y) - \tilde{f}(A_R z)}_{L_1} + \norm{\tilde{f}(A_R z) - \tilde{f}(\tilde{z})}_{L_1} \\
        & \leq \alpha c_{\round} + 2 \alpha \eps^2 r R + \alpha c_{\round} = 2 \alpha \eps^2 r R + 2 \alpha c_{\round},
    \end{align*}
    as claimed. We now deduce the lemma statement from this.
    Let $\eps > 0$ be sufficiently small so that $4 \alpha C \eps \leq 1$ and let $f \colon (\bH(\bZ), d_T) \to L_1$ be 1-Lipschitz. We have just shown that there is $\eps_0 > 0$ such that for all $R > 0$, there are $r \geq \eps_0$, $\tilde{x} \in \bH(\bZ)$, and $K \subset \tilde{x} \cdot Z(\bH(\bZ))$ of size at least $(rR/C)^2$ such that $\norm{f(\tilde{y}) - f(\tilde{z})}_{L_1} \leq 2 \alpha \eps^2 r R + 2 \alpha c_{\round}$ for every $\tilde{y}, \tilde{z} \in K$.
    
    Let $R$ be sufficiently large so that $R \geq c_{\round} \eps^{-2} \eps_0^{-1}$. Let $r\ge \varepsilon_0$, $\tilde{x}\in \mathbb{H}(\mathbb{Z})$, and $K\subseteq \tilde{x}\cdot Z(\mathbb{H}(\mathbb{Z}))$ be given as in the statement above, and let us define $M \coloneqq rR/C$ and $x \coloneqq \tilde{x}$. Note that $M \geq \eps_0 R/C$, which means we can make $M$ as large as we want by choosing $R$ sufficiently large.
    Then $\abs{K} \geq M^2$ and
    \begin{equation*}
        2 \alpha \eps^2 r R + 2 \alpha c_{\round} \leq 4 \alpha \eps^2 rR \leq \eps \cdot rR/C = \eps M,
    \end{equation*}
    so $x$ and $K$ satisfy the required properties.
\end{proof}

\section{Isometric \texorpdfstring{$L_1$}{l1}-embeddings of infinite trees}\label{sec:isometry}

Recall that a (possibly infinite) graph $T$ is a \defn{tree} if it is connected (i.e., any two vertices can be connected by a finite-length path) and contains no finite cycles. 
In our proof of \cref{thm:main} we will require the following simple lemma about isometric embeddings of countable trees into $L_1$.

\begin{lem}\label{lem:isometrictreeembedding}
    Let $T$ be a finite or countably infinite tree. Then there exists an isometric embedding of $T$ into $L_1$. In other words, there exists a function $\iota:V(T)\rightarrow L_1$ such that for every pair of vertices $u,v\in V(T)$, we have $\dist_T(u,v)=\norm{\iota(u) - \iota(v)}_{L_1}$. 
\end{lem}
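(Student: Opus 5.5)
We embed $T$ edge by edge, using disjoint unit-measure intervals. Since $T$ is countable, its edge set $E(T)$ is countable, so we may fix an injection $e \mapsto n_e$ from $E(T)$ into $\bN$. To each edge $e$ we assign the interval $I_e \coloneqq [n_e, n_e+1) \subset \bR$. These intervals are pairwise disjoint and have Lebesgue measure $1$, and we regard $L_1$ as $L_1(\bR)$.

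Next we fix a root $v_0 \in V(T)$. Because $T$ is connected and acyclic, every vertex $v$ is joined to $v_0$ by a unique finite path; write $P(v) \subseteq E(T)$ for its edge set. We then define
\begin{equation*}
    \iota(v) \coloneqq \sum_{e \in P(v)} \mathds{1}_{I_e} = \mathds{1}_{\bigcup_{e \in P(v)} I_e}.
\end{equation*}
This is a finite sum of indicator functions, so $\iota(v) \in L_1$.

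To check that $\iota$ is an isometry, take $u, v \in V(T)$. Since the $I_e$ are disjoint,
\begin{equation*}
    \norm{\iota(u) - \iota(v)}_{L_1} = \abs{P(u) \mathbin{\triangle} P(v)}.
\end{equation*}
It remains to show that $P(u) \mathbin{\triangle} P(v)$ is exactly the edge set of the unique $u$--$v$ path in $T$. Let $w$ be the last common vertex of the two root paths. Both root paths agree up to $w$ and are disjoint afterwards, by uniqueness of paths in a tree. Hence the symmetric difference consists of the $w$--$u$ segment together with the $w$--$v$ segment. Their union is a $u$--$v$ path, so by uniqueness it is the $u$--$v$ path in $T$, and its length is $\dist_T(u,v)$.

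The only step needing any care is this identification of $P(u) \mathbin{\triangle} P(v)$ with the $u$--$v$ path, and it follows directly from the uniqueness of paths in a tree. Note that $T$ may be infinite, but every path involved is finite, so no convergence issues arise.
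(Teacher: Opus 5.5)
Your proposal is correct and follows essentially the same argument as the paper: assign each edge a disjoint unit interval, map each vertex to the indicator of the union of the intervals of its root path, and identify the symmetric difference of root paths with the $u$--$v$ path. Using an injection $E(T)\to\bN$ instead of an enumeration lets you skip the paper's reduction to infinite trees. Your last-common-vertex argument also spells out the step the paper justifies only with ``since $T$ is a tree''.
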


\begin{proof}
Since every finite tree is an isometric subgraph of some infinite tree, we may assume that $T$ is infinite. Enumerate the edges of $T$ as $(e_i)_{i \in \bN}$.

Fix some arbitrary vertex $r\in V(T)$, and for each vertex $v \in V(T)$, let $E_v\subseteq E(T)$ denote the (finite) set of edges on the unique path from $v$ to $r$ in $T$. Now, for each $v\in V(T)$, let us define $S_v \coloneqq \bigcup_{i\in \bN \colon e_i\in E_v}[i - 1, i)\subseteq \bR$, and let
$\iota(v) \coloneqq \mathbf{1}_{S_v}\in L_1$ be the indicator function of this set. Then $\norm{\iota(u) - \iota(v)}_{L_1} = \abs{E_u \triangle E_v}$ for every pair $u, v \in V(T)$. Since $T$ is a tree, the symmetric difference $E_u \triangle E_v$ is exactly the set of edges on the unique path connecting $u$ and $v$, and hence $\norm{\iota(u) - \iota(v)}_{L_1} = \dist_T(u, v)$, as desired.
\end{proof}

\section{Proof of \texorpdfstring{\cref{thm:main}}{main theorem}}\label{sec:proof}

In this section, we put all the ingredients developed across the previous sections together to finally deliver the proof of our main result.

\begin{proof}[Proof of \cref{thm:main}]
Fix $C \in \bN$ and suppose, towards a contradiction, that there are finite or countably infinite trees $F_1, F_2, F_3, F_4$ such that $f_{F_i}(r) \leq Cr$, for all $i$ and $r$, and $G \coloneqq \Cay(\bH(\bZ), T)$ is isomorphic to a subgraph of the product $P \coloneqq F_1 \boxtimes F_2 \boxtimes F_3 \boxtimes F_4 \boxtimes K_C$.

Let $\Phi \colon \bH(\bZ) = V(G) \to V(F_1) \times V(F_2) \times V(F_3) \times V(F_4) \times [C]$ be an embedding of $G$ into $P$, and let $\Phi_i \colon V(G) \to V(F_i)$ for $i = 1, \dots, 4$ and $\Phi_5 \colon V(G) \to [C] = V(K_C)$ be the coordinate-restrictions of $\Phi$. Then, by definition of the strong product and since $\Phi$ is an embedding, every two  $u, v \in V(G)$ satisfy
\begin{equation*}
    \max_{1\le i \le 4}\dist_{F_i}(\Phi_i(u),\Phi_i(v))\le \dist_P(\Phi(u),\Phi(v))\le \dist_G(u,v).
\end{equation*}
For each $i \in \set{1, 2, 3, 4}$, fix an isometric embedding $\iota_i$ of $F_i$ into $L_1$ as given by \cref{lem:isometrictreeembedding}.

We further define a linear map $F \colon L_1^4 \to L_1$ as follows. Let $(g_1, g_2, g_3, g_4) \in L_1^4$ and $x \in \bR$. $x$ can be uniquely written as $x = 4\ell + i + z$ for $\ell \in \bZ$, $i \in \set{1, 2, 3, 4}$ and $z\in [0,1)$. We set $F(g_1,g_2,g_3,g_4)(x) \coloneqq g_i(\ell + z)$. 

It is then easy to check that $F$ is well-defined, a bijection, and satisfies $\norm{F(g_1,g_2,g_3,g_4)}_{L_1} \allowbreak = \norm{g_1}_{L_1} + \norm{g_2}_{L_1} + \norm{g_3}_{L_1} + \norm{g_4}_{L_1} \le 4 \max_{1\le i \le 4} \norm{g_i}_{L_1}$ for all $(g_1, g_2, g_3, g_4) \in L_1^4$. 

We now define a map $f \colon \bH(\bZ) = V(G) \to L_1$ as follows. For every vertex $v \in V(G)$, we set
\begin{equation*}
    f(v) \coloneqq F(\iota_1(\Phi_1(v)), \iota_2(\Phi_2(v)), \iota_3(\Phi_3(v)), \iota_4(\Phi_4(v))).
\end{equation*}
Every two elements $u, v \in \bH(\bZ)$ satisfy
\begin{align}
    \norm{f(u)-f(v)}_{L_1} & = \sum_{i=1}^4 \norm{\iota_i(\Phi_i(u)) - \iota_i(\Phi_i(v))}_{L_1} \nonumber \\
    & = \sum_{i=1}^4 \dist_{F_i}(\Phi_i(u), \Phi_i(v)) \label{eq:biLip} \\
    & \le 4\max_{1\le i\le 4} \dist_{F_i}(\Phi_i(u),\Phi_i(v)) \nonumber \\
    &\le 4 \dist_G(u,v) = 4d_T(u,v), \nonumber
\end{align}
where the final inequality used the fact that $\Phi_i \colon V(G) \to V(F_i)$ is $1$-Lipschitz with respect to graph distances in $G$ and $F_i$, respectively.
Thus, the map $f \colon (\bH(\bZ), d_T) \to L_1$ is $4$-Lipschitz.

Let $\eps > 0$ be sufficiently small so that \cref{lem:discretequantitativecollapse} applies and $\eps < \frac{1}{49C^{5/2}}$. 

\Cref{lem:discretequantitativecollapse} applied to $\eps$ and the $1$-Lipschitz map $\frac{1}{4}f \colon \bH(\bZ) \to L_1$ gives $M > \frac{3}{\eps}$, $x \in \bH(\bZ)$, and a subset $K \subseteq x \cdot Z(\bH(\bZ))$ such that $\abs{K} \ge M^2$, and, for all $y, z \in K$, 
\begin{equation*}
    \norm{f(y) - f(z)}_{L_1} \leq 4 \eps M.
\end{equation*}
Equation~\eqref{eq:biLip} then implies that $\dist_{F_i}(\Phi_i(y), \Phi_i(z)) \le 4 \eps M$ for all $y, z \in K$. Fix some $k_0 \in K$.

Now set $d \coloneqq 2 \ceil{\eps M} \in \mathbb{N}$ and consider the following subset of $\bH(\bZ)$:
\begin{equation*}
    K' \coloneqq \set[\big]{k\cdot x_1\cdots x_r \colon k\in K, \ x_1, \dots, x_r \in \set{(1,0,0), (0,1,0)}, \ r \in \set{0, 1, \dots, d}}.
\end{equation*}
Note that every element of $K'$ has distance at most $d$ from some element of $K$ under the word metric $d_T$. Thus, since $\Phi_i \colon V(G) \to V(F_i)$ is $1$-Lipschitz with respect to graph distances, for all $z \in K'$ there is some $k \in K$ such that $\dist_{F_i}(\Phi_i(k), \Phi_i(z)) \le d$. The triangle inequality then gives $\dist_{F_i}(\Phi_i(k_0), \Phi_i(z)) \leq d + 4 \eps M \leq 7 \eps M$. Since $z \in K'$ was arbitrary, $\Phi_i(K')$ is a subset of the ball of radius $\floor{7 \eps M}$ around $\Phi_i(k_0)$ in $F_i$. This implies that $\abs{\Phi_i(K')} \le f_{F_i}(\lfloor 7\eps M\rfloor)\le C\lfloor 7\eps M\rfloor\le 7C\eps M$. This holds for all $i \in \set{1, 2, 3, 4}$. Furthermore, since $\abs{\Phi_5(K')} \le C$,
\begin{equation*}
    \abs{\Phi(K')} \le \prod_{i=1}^{5} \abs{\Phi_i(K')} \le (7C\eps M)^4 \cdot C.
\end{equation*}
Recall that $\Phi$ is an embedding and hence injective, so $\abs{K'} \le 7^4 C^5 \eps^4 M^4$.

We next prove that $\abs{K'} \ge \eps^2 M^4$. Write $K = x \cdot Z$ for some $Z \subseteq Z(\bH(\bZ))$ of size $\abs{K}$.
Define 
\begin{equation*}
    R \coloneqq \set{x_1 \dotsb x_r \colon x_1, \dots, x_r \in \set{(1,0,0), (0,1,0)}, \ r \in \set{0, 1, \dots, d}}.
\end{equation*}
Let $R_2 \subseteq \bZ^2$ denote the projection of $R$ to the first two coordinates. It follows from the definition of the multiplication in the discrete Heisenberg group that $R_2 = \set{(a,b) \in \bZ_{\ge 0}^2 \colon a + b \le d}$, and so $\abs{R_2} \ge (d/2)^2 \ge \eps^2M^2$. Let $t \coloneqq \ceil{\eps^2 M^2}$ and $r_1, \dots, r_t \in R$ be such that the projections of the $r_i$ to their first two coordinates are pairwise distinct. Recall that $Z(\bH(\bZ)) = \set{0} \times \set{0} \times \bZ$ and so multiplying an element of the centre by any element of $\bH(\bZ)$ does not change the first two coordinates of the latter. In particular, the sets $Zr_1$, $Zr_2$, \ldots, $Zr_t$ are pairwise disjoint. Hence
\begin{equation*}
    \abs{K'} = \abs{K \cdot R} = \abs{x \cdot Z \cdot R} = \abs{Z\cdot R} \ge \sum_{i=1}^t\abs{Z r_i} = t \abs{Z} = t \abs{K} \ge \eps^2M^4,
\end{equation*}
as desired. Combining our upper and lower bounds for $\abs{K'}$ gives $\eps^2 M^4 \le 7^4 C^5 \eps^4 M^4$, which rearranges to $\eps\ge \frac{1}{49C^{5/2}}$. This contradicts our choice of $\eps$, as required. 
\end{proof}

\section{Compactness}\label{sec:compactness}

In this section we prove the following compactness result.

\begin{lem}\label{lem:compactness-tree-products}
    Let $G$ be a countable connected graph and let $C \in \bN$. Suppose that for every finite subgraph $H \subseteq G$ there are finite trees $T_1, \dots, T_4$ such that
    \[
        \abs{B_{T_i}(x, r)} \le Cr
    \]
    for every $i \in \set{1, \dots ,4}$, every $x\in V(T_i)$, and every $r \in \bN$, and such that $H$ is isomorphic to a subgraph of
    \[
        T_1 \boxtimes T_2 \boxtimes T_3 \boxtimes T_4 \boxtimes K_C.
    \]
    Then there are finite or countably infinite trees $F_1, \dots, F_4$ such that
    \[
        \abs{B_{F_i}(x,r)} \leq Cr
    \]
    for every $i \in \set{1, \dots ,4}$, every $x\in V(F_i)$, and every integer $r\ge 1$, and such that $G$ is isomorphic to a subgraph of
    \[
        F_1 \boxtimes F_2 \boxtimes F_3 \boxtimes F_4 \boxtimes K_C.
    \]
\end{lem}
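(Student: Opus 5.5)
We will use a standard compactness (König's lemma / diagonalisation) argument, with one preliminary normalisation so that only finitely many configurations need to be considered at each stage. Enumerate $V(G) = \set{v_1, v_2, \dots}$; if $G$ is finite there is nothing to prove. Let $H_n \coloneqq G[\set{v_1,\dots,v_n}]$, or better, a finite connected subgraph containing $v_1,\dots,v_n$, which exists since $G$ is connected. For each $n$ fix trees $T^n_1,\dots,T^n_4$ and an embedding $\Phi^n$ of $H_n$ into $T^n_1\boxtimes\dots\boxtimes T^n_4\boxtimes K_C$.

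\emph{Normalisation.} Replace each $T^n_i$ by the subtree spanned by $\Phi^n_i(V(H_n))$. Since $H_n$ is connected and each coordinate map $\Phi^n_i$ is $1$-Lipschitz, consecutive images are equal or adjacent. Hence the image of $V(H_n)$ is already connected, so it is a subtree. A subtree $S$ of a tree is isometric, so balls in $S$ are contained in balls of the original tree and the growth bound $\abs{B(x,r)}\le Cr$ persists. After this step, every vertex of $T^n_i$ is the image of some vertex of $H_n$.

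\emph{Finite local data.} For a fixed $m\le n$, the \emph{$m$-type} of stage $n$ records, for each $i$ and each $j,k\le m$, the distance $\dist_{T^n_i}(\Phi^n_i(v_j),\Phi^n_i(v_k))$, together with the labels $\Phi^n_5(v_j)\in[C]$. The labels take finitely many values. The distances are at most $\dist_G(v_j,v_k)$, so they are bounded independently of $n$. Hence there are finitely many $m$-types, and by diagonalisation we can pass to a subsequence along which the $m$-type is eventually constant for every $m$.

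\emph{Building the limit.} The limiting distance functions $\delta_i$ on $\set{v_1,v_2,\dots}$ are pseudometrics. Every finite restriction of $\delta_i$ is realised by points in a tree, so each $\delta_i$ satisfies the four-point condition and is integer-valued. I would define $F_i$ as the tree whose vertices are the $\delta_i$-classes, plus the interior vertices of geodesics in the corresponding tree realisations. To avoid the awkwardness of adding such vertices, I would instead also track, for each pair $j,k\le m$, the whole geodesic path in $T^n_i$. Its vertices are images of some $v_\ell$ by the normalisation, but $\ell$ need not be bounded. This is the main obstacle.

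To fix it, use connectivity: a walk in $H_n$ from $v_j$ to $v_k$ maps under $\Phi^n_i$ to a walk in $T^n_i$ that covers the geodesic. Choosing $H_n \supseteq$ a fixed connected subgraph containing shortest $G$-paths between all pairs among $v_1,\dots,v_m$, these geodesic vertices are images of vertices within bounded $G$-distance. Relabelling the enumeration so that these vertices appear early, one can arrange that the geodesics between $v_j,v_k$ are covered by images of $v_\ell$ with $\ell\le N(m)$. Then $F_i$ is the quotient of $V(G)$ by $\delta_i=0$, with edges between classes at $\delta_i$-distance $1$. This graph is connected and acyclic, because any cycle would already appear at some finite stage where the graph is a subtree of $T^n_i$.

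\emph{Verifying the conclusions.} The balls of $F_i$ are isometric to balls in the $T^n_i$ at finite stages, which gives $\abs{B_{F_i}(x,r)}\le Cr$. Define $\Phi(v) = ([v]_1,\dots,[v]_4,\text{label})$. It maps edges to edges or equal points, because $\delta_i\le 1$ on edges. It is injective, because two vertices with identical limit data would collide under $\Phi^n$ at a large stage, contradicting injectivity. Thus $G$ embeds in $F_1\boxtimes\dots\boxtimes F_4\boxtimes K_C$, and $F_i$ is countable as a quotient of $V(G)$.
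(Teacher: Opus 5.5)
Your proposal is correct and is essentially the paper's argument: you diagonalise over the finitely many label/tree-distance patterns along an exhaustion, take $F_i$ to be $V(G)$ modulo $\delta_i = 0$ with edges where $\delta_i = 1$, and verify acyclicity, the growth bound and the embedding by transferring finitely many vertices to a large stage $T_i^n$. The geodesic-tracking detour and the normalisation are unnecessary: the paper truncates distances at $2$, so finiteness of patterns does not depend on $H_n$ containing shortest paths, and connectivity of $F_i$ comes directly from connectivity of $G$. For the growth bound you only need that any finitely many distinct classes of $B_{F_i}(x,r)$ map injectively into $B_{T_i^n}(\Phi_i^n(u),r)$ at a large stage; your claim that the balls are isometric is false in general.
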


\Cref{lem:compactness-tree-products} together with \cref{thm:main} immediately imply the following which shows that the tree product conjecture, \cref{con:wood}, is false.

\begin{thm}\label{thm:main-finite}
    For every $C \in \bN$, there exists a finite graph $H \subseteq \Cay(\bH(\bZ), T)$ with growth function at most $27\cdot r^4$, and such that for every quadruple of trees $T_1, \dots, T_4$ satisfying $f_{T_i}(r)\le Cr$, the graph $H$ is not isomorphic to any subgraph of $T_1\boxtimes T_2\boxtimes T_3\boxtimes T_4\boxtimes K_C$.
\end{thm}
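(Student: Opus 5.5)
We argue by contradiction, combining \cref{lem:compactness-tree-products} with \cref{thm:main}. Fix $C \in \bN$ and let $G \coloneqq \Cay(\bH(\bZ), T)$, which is countable and connected. Suppose that every finite subgraph $H \subseteq G$ is isomorphic to a subgraph of some $T_1 \boxtimes T_2 \boxtimes T_3 \boxtimes T_4 \boxtimes K_C$, where each $T_i$ is a tree with $f_{T_i}(r) \le Cr$.

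First we align the hypotheses. The condition $f_{T_i}(r) \le Cr$ for all $r \in \bN$ is, by the definition of the growth function as a supremum of ball sizes, exactly the condition $\abs{B_{T_i}(x,r)} \le Cr$ for every vertex $x$ and every $r \in \bN$, which is the hypothesis of \cref{lem:compactness-tree-products}. If the statement is read as allowing infinite trees $T_i$, the contrapositive hypothesis is only stronger in the wrong direction. In that case we first reduce to finite trees: a finite $H$ embeds into a finite subproduct. Concretely, replace each $T_i$ by the minimal subtree spanning the (finite) projection of the image of $H$. A subtree has balls no larger than those of $T_i$, so the growth bound is preserved, and the strong product of subtrees is an induced subgraph of the original product, so $H$ still embeds.

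Applying \cref{lem:compactness-tree-products} then yields finite or countably infinite trees $F_1, \dots, F_4$ with $\abs{B_{F_i}(x,r)} \le Cr$ for all $x$ and all $r \ge 1$. This is exactly $f_{F_i}(r) \le Cr$. The lemma also gives that $G$ is isomorphic to a subgraph of $F_1 \boxtimes F_2 \boxtimes F_3 \boxtimes F_4 \boxtimes K_C$, which contradicts \cref{thm:main}.

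Hence some finite $H \subseteq G$ admits no such embedding for any quadruple of trees. It remains to check the growth bound for this $H$. Distances in a subgraph are at least the distances in $G$, so every ball in $H$ is contained in the corresponding ball of $G$. Therefore $f_H(r) \le f_G(r) \le 27 r^4$ by \cref{lem:growth}.

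The only step requiring care is the finite/infinite tree bookkeeping just described. The substantive work is deferred to \cref{lem:compactness-tree-products}, which we assume.
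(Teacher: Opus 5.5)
Your proposal is correct and takes the same route as the paper, which obtains this theorem directly from \cref{lem:compactness-tree-products} and \cref{thm:main}, with the growth bound for $H$ following from \cref{lem:growth} because balls in a subgraph lie inside balls of $G$. Your extra step, replacing possibly infinite trees by the finite spanning subtrees of the projections, is valid, since such subtrees are finite, induced, and have balls no larger than those of the original trees. Under the paper's convention that unqualified graphs are finite, that step is not needed.
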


\begin{proof}[Proof of \cref{lem:compactness-tree-products}]
    First choose an increasing exhaustion of $G$ by finite connected subgraphs
    \[
        H_1\subseteq H_2\subseteq \cdots \subseteq G, \qquad \bigcup_{n \ge 1} H_n = G,
    \]
    that is, every vertex and every edge of $G$ belongs to some $H_n$.
    For each $n$, apply the hypothesis of the lemma to $H_n$: there are finite trees \defn{$T^n_1$}, \ldots, \defn{$T^n_4$} satisfying the linear ball-growth bound and, for each $i \in \set{1, \dots, 4}$, an embedding
    \[
        \Phi^n \colon H_n \to T^n_1 \boxtimes \dotsb \boxtimes T^n_4 \boxtimes K_C.
    \]
    Write
    \[
        \Phi^n(v) = \bigl(\phi^n_1(v), \dots , \phi^n_4(v), \chi^n(v)\bigr),
    \]
    where $\phi^n_i(v) \in V(T_i^n)$ and $\chi^n(v) \in [C]$.
    For $u, v \in V(H_n)$ and $i \in \set{1, \dots, 4}$, define
    \[
        \rho_i^n(u,v) \coloneqq \min\set{2, \dist_{T_i^n}(\phi_i^n(u),\phi_i^n(v))} \in \set{0, 1, 2}.
    \]
    Thus, in the $i$th tree coordinate, $\rho_i^n(u,v) = 0$ records equality, $\rho_i^n(u, v) = 1$ records adjacency, and $\rho_i^n(u, v) = 2$ records distance at least $2$. For fixed $m$, we define the \defn{pattern function for $H_m$}, \defn{$\cP_m(\bullet)$}, as follows. For $n \geq m$, $\cP_m(n)$ is the finite pattern consisting of the values
    \[
            \bigl(\chi^n(v) \colon v \in V(H_m)\bigr)
            \quad\text{and}\quad
            \bigl(\rho_i^n(u,v) \colon 1 \leq i \leq 4, \, u, v \in V(H_m)\bigr).
    \]
    Since $\cP_1(\bullet)$ can only take finitely many values, there is an infinite $S_1 \subset \bN$ and a pattern $\cP_1$ such that $\cP_1(n) = \cP_1$ for all $n \in S_1$. Similarly, there is an infinite $S_2 \subset S_1$ and a pattern $\cP_2$ such that $\cP_2(n) = \cP_2$ for all $n \in S_2$. Iterating we obtain infinite sets $\bN \supset S_1 \supset S_2 \supset \dots$ and patterns $\cP_1$, $\cP_2$, \ldots\ such that $\cP_m(n) = \cP_m$ for all $n \in S_m$. $\cP_m$ is the \defn{limit pattern for $H_m$}. For $m' > m$, $S_{m'} \subset S_m$ and so $\cP_m$ is the restriction of $\cP_{m'}$ to $V(H_m)$. Thus, there is a pattern \defn{$\cP$} recording a value $\chi(v) \in [C]$, for each $v \in V(G)$, and a value $\rho_i(u, v) \in \set{0, 1, 2}$, for all $1 \leq i \leq 4$, $u, v \in V(G)$, such that $\cP_m$ is the restriction of $\cP$ to $V(H_m)$ for each $m \in \bN$. $\cP$ is the \defn{limit pattern for $G$}. Let $S \subset \bN$ be an infinite set whose $m$th smallest element is in $S_m$ for all $m$. Note that $\chi(v) = \lim_{n \to \infty, n \in S} \chi^n(v)$ and $\rho_i(u, v) = \lim_{n \to \infty, n \in S} \rho_i^n(u, v)$ for all vertices $u, v \in V(G)$ and $i \in \set{1, \dots, 4}$.
    
    For each $i \in \set{1, \dots, 4}$, we define an equivalence relation \defn{$\sim_i$} on $V(G)$: $u \sim_i v$ if and only if $\rho_i(u, v) = 0$. This relation is plainly reflexive and symmetric. Also, note that if $\rho_i^n(u, v) = \rho_i^n(v, w) = 0$, then $\phi_i^n(u) = \phi_i^n(v) = \phi_i^n(w)$ and so $\rho_i^n(u, w) = 0$. Thus the relation is also transitive.
    Let
    \[
            V(F_i) \coloneqq V(G)/{\sim_i},
    \]
    which is a countable set, since $V(G)$ is countable.
    We write \defn{$[u]_i$} for the $\sim_i$-class of $u$. 
    Note that $\rho_i$ can be well-defined on $V(F_i)$: that is $\rho_i([u]_i, [v]_i)$ is well-defined.
    Indeed, if $[u]_i = [u']_i$ and $[v]_i = [v']_i$, then, for large $n \in S$, $\rho_i^n(u, u') = 0 = \rho_i^n(v, v')$ and so $\phi_i^n(u) = \phi_i^n(u')$ and $\phi_i^n(v) = \phi_i^n(v')$ which implies $\rho_i^n(u, v) = \rho_i^n(u', v')$ and hence $\rho_i(u, v) = \rho_i(u', v')$.
    The edges of $F_i$ are given by joining all pairs of classes $[u]_i$, $[v]_i$ where $\rho_i([u]_i, [v]_i) = 1$.
    
    We now show that $F_i$ is a tree. Fix vertices $[u]_i$ and $[v]_i$. Since $G$ is connected, there is a finite path $u = w_0 \dots w_{\ell} = v$ in $G$.
    For each $t$, $w_t w_{t + 1}$ is an edge of $G$ and so $\rho_i^n(w_t, w_{t + 1}) \leq 1$ for all large $n$.
    This implies that $\rho_i(w_t, w_{t + 1}) \leq 1$ for each $t$ and so vertices $[w_t]_i$ and $[w_{t + 1}]_i$ are either equal or adjacent in $F_i$. Thus, $[w_0]_i \dots [w_\ell]_i$ is a lazy walk\footnote{A \defn{lazy walk} is a walk in which consecutive vertices may be the same.} from $[u]_i$ to $[v]_i$ and so $F_i$ is connected.
    Next, suppose that $F_i$ contains a finite cycle $[u_0]_i [u_1]_i \dots [u_{\ell - 1}]_i [u_0]_i$ of length $\ell \geq 3$. This implies $\rho_i(u_s, u_t) \neq 0$ for all $s \neq t$ and $\rho_i(u_t, u_{t + 1}) = 1$ for all $t$ (indices modulo $\ell$). Thus, for sufficiently large $n \in S$, the $\phi_i^n(u_t)$ are distinct vertices of $T_i^n$ and $\phi_i^n(u_t) \phi_i^n(u_{t + 1}) \in E(T_i^n)$ for all $t$ (indices modulo $\ell$). However, this implies that $\phi_i^n(u_0) \phi_i^n(u_1) \dots \phi_i^n(u_{\ell - 1}) \phi_i^n(u_0)$ is a cycle in the tree $T_i^n$, which is the required contradiction. Thus $F_i$ is indeed a tree.
    
    We now verify the linear ball-growth bound in $F_i$. Let $x$ be a vertex of $F_i$ and fix some $u \in V(G)$ with $x = [u]_i$. Let $[v]_i \in B_{F_i}(x, r)$. Let $[w_0]_i \dots [w_\ell]_i$ (where $w_0 = u$, $w_{\ell} = v$) be a path from $[u]_i$ to $[v]_i$ of length $\ell \leq r$ in $F_i$. For each $t$, $\rho_i(w_t, w_{t + 1}) = 1$ and so, for all large $n \in S$, $\phi_i^n(w_0) \dots \phi_i^n(w_\ell)$ is a walk from $\phi_i^n(u)$ to $\phi_i^n(v)$ in $T_i^n$ of length $\ell$. That is, for all large $n \in S$, $\phi_i^n(v) \in B_{T_i^n}(\phi_i^n(u), r)$. That is, every vertex in $B_{F_i}(x, r)$ is an equivalence class all of whose vertices, $v$, satisfy $\phi_i^n(v) \in B_{T_i^n}(\phi_i^n(u), r)$ for all large $n \in S$. Let $[v_1]_i, \dots, [v_s]_i$ be pairwise distinct vertices of $B_{F_i}(x, r)$. Then, for all large $n \in S$, the $\phi_i^n(v_t)$ are distinct vertices of $B_{T_i^n}(\phi_i^n(u), r)$ and so $s \leq \abs{B_{T_i^n}(\phi_i^n(u), r)} \leq Cr$ by the hypothesis of the lemma. This implies that $\abs{B_{F_i}(x, r)} \leq Cr$, as desired.
    
    We have now constructed the four countable trees $F_1, \dots ,F_4$ with the required linear ball-growth bound.  Define
    \begin{align*}
        \Phi \colon V(G) & \to V(F_1 \boxtimes \dotsb \boxtimes F_4 \boxtimes K_C), \\
        v & \mapsto \bigl([v]_1,[v]_2,[v]_3,[v]_4,\chi(v)\bigr).
    \end{align*}
    It remains to prove that $\Phi$ is an embedding of $G$ into a subgraph of the strong product.
    Let $u, v \in V(G)$ with $\Phi(u) = \Phi(v)$. Then $\chi(u) = \chi(v)$ and $\rho_i(u, v) = 0$ for every $i$. This implies that, for all large $n \in S$, $\chi^n(u) = \chi^n(v)$ and $\phi_i^n(u) = \phi_i^n(v)$. This implies that $\Phi^n(u) = \Phi^n(v)$. However $\Phi^n$ is an embedding and so injective which implies $u = v$. Thus $\Phi$ is injective.
    
    Finally, let $uv \in E(G)$. Then $\rho_i^n(u,v) \leq 1$ for all large $n$ and so $\rho_i(u, v) \leq 1$ which implies that vertices $[u]_i, [v]_i$ are adjacent or equal in $F_i$ for all $i \in \set{1, \dots, 4}$. This implies that vertices $\Phi(u)$ and $\Phi(v)$ are equal or adjacent in $F_1 \boxtimes \dots \boxtimes F_4 \boxtimes K_C$ (since $K_C$ is complete the values of $\chi(u), \chi(v)$ do not matter). By injectivity of $\Phi$, they must in fact be adjacent, as required.
\end{proof}

{
\fontsize{11pt}{12pt}
\selectfont
	
\hypersetup{linkcolor={red!70!black}}
\setlength{\parskip}{2pt plus 0.3ex minus 0.3ex}

\newcommand{\etalchar}[1]{$^{#1}$}

}

\end{document}